\documentclass[12pt]{amsart}
\usepackage{amsthm,amsmath,amssymb}
\usepackage{graphicx}
\usepackage[colorlinks=true,citecolor=black,linkcolor=black,urlcolor=blue]{hyperref}
\usepackage{tikz-cd}
\usepackage{algpseudocode} 
\usepackage{tikz}
\usetikzlibrary{fit,positioning,hobby,calc}

\newcommand{\R}{\mathbb{R}}
\newcommand{\FF}{\mathbb{F}}

\newcommand{\e}{\varepsilon}
\newcommand{\ir}{\mathord{\mathrm{ir}}}

\newcommand{\di}{\partial}
\renewcommand{\a}{\alpha}
\newcommand{\f}{\varphi}

\newcommand{\hide}[1]{}

\def\:{\mkern 1.2mu \colon}
\newcommand{\us}{\mathord{\mathrm{Upper}}}
\newcommand{\ls}{\mathord{\mathrm{Lower}}}


\definecolor{rdeca}{rgb}{0.81, 0.09, 0.13}
\definecolor{oranzna}{rgb}{1.0, 0.45, 0.0}
\definecolor{rumena}{rgb}{1.0, 0.70, 0.0}
\definecolor{szelena}{rgb}{0.56, 0.62, 0.06}
\definecolor{zelena}{rgb}{0.13, 0.55, 0.13}
\definecolor{smodra}{rgb}{0.0, 0.65, 0.83}
\definecolor{modra}{rgb}{0.0, 0.25, 0.7}
\definecolor{sliva}{rgb}{0.56, 0.27, 0.52}
\definecolor{mz}{rgb}{0.065, 0.385, 0.395}
\definecolor{mv}{rgb}{0.28, 0.245, 0.59}

\theoremstyle{plain}
\newtheorem{theorem}{Theorem}
\newtheorem{lemma}[theorem]{Lemma}
\newtheorem{corollary}[theorem]{Corollary}
\newtheorem{proposition}[theorem]{Proposition}

\theoremstyle{definition}
\newtheorem{definition}[theorem]{Definition}
\newtheorem{example}[theorem]{Example}

\theoremstyle{remark}
\newtheorem{remark}[theorem]{Remark}

\begin{document}
\title{Rigidity of terminal simplices in persistent homology}

\author{Aleksandra Franc}
\address{University of Ljubljana, Slovenia}
\email{aleksandra.franc@fri.uni-lj.si}

\author{\v{Z}iga Virk}
\address{University of Ljubljana, Slovenia, and Institute IMFM, Ljubljana, Slovenia}
\email{ziga.virk@fri.uni-lj.si}

\thanks{The authors were supported by Slovenian Research Agency grant No. N1-0114. The second named author was also supported by Slovenian Research Agency grants No. J1-4001, J1-4031, and P1-0292.}\

\maketitle
 
\renewcommand{\thefootnote}{\fnsymbol{footnote}} 
\footnotetext{Keywords: Persistent homology; Stability Theorem; Terminal simplex; Rigidity}     
\footnotetext{MSC 2020: 55N31}
\footnotetext{The authors would like to thank the referee for a careful reading and valuable comments.}
\renewcommand{\thefootnote}{\arabic{footnote}}

\begin{abstract}
Given a filtration function on a finite simplicial complex, stability theorem of persistent homology states that the corresponding barcode is continuous with respect to changes in the filtration function. However, due to the discrete setting of simplicial complexes, the simplices terminating matched bars cannot change continuously for arbitrary perturbations of filtration functions. In this paper we provide a sufficient condition for rigidity of a terminal simplex, i.e., a condition on $\e>0$ implying that the terminal simplex of a homology class or a bar in persistent homology remains constant through $\e$-perturbations of filtration function. The condition for a homology class or a bar in dimension $n$ depends only on the barcodes in dimensions $n$ and $n+1$.
\end{abstract}

\section{Introduction}

Let $K$ be a finite simplicial complex. A \textbf{filtration function} assigns to each simplex in $K$ a unique real value, such that for each simplex, the values of its faces are lower than the value of the simplex. For each $r\in \R$ we define $K_r$ as the subcomplex of $K$ consisting of all the simplices whose values are at most $r$. The collection of subcomplexes $\{K_r\}_{r\in \R}$ connected by the natural inclusions $K_r \to K_{r'}$ for each $r \leq r'$  is called a filtration of $K$. 

Persistent homology \cite{EdelsZomo, EdelsHarer} is a parameterized version of homology, with the parameter arising from a filtration function of a simplicial complex. It is obtained by applying a homology to a filtration, which yields a collection of homology groups $\{H_n(K_r)\}_{r\in\R}$ and inclusion-induced maps between them. One of its fundamental features as compared to homology is stability \cite{EdelsMoro}. In particular, small perturbations of filtration functions induce small perturbations to the lifespans (along parameter $r$) of homology classes. However, the terminal simplices and the corresponding homology representatives of persistent homology cannot change continuously with arbitrary perturbations of filtration functions. 

As simplices keep appearing in a filtration, non-trivial homology classes are either appearing (if the boundary of the added simplex is homologically trivial) or terminating. A simplex terminating a homology class (i.e., identifying the class with the trivial class) is called a \textbf{terminal} simplex. 
A corresponding homology representative, appearing at the first possible scale of the filtration, can be obtained from the boundary of the terminal simplex. As such, the terminal simplices allow us to localize and determine a geometric manifestation of homology terminating with the simplex. While finding a suitable homology representative in persistent homology is by itself challenging \cite{Chao}, the fact that small perturbations of a filtration function may yield completely different terminal simplices results in unstable representatives. There has been an attempt to circumvent this issue in practice \cite{Bub}.

In this paper we study the region of parameter $\e$ for which the terminal  simplex of a persistent homology class $[\alpha]$ of an injective  filtration function $f$  is \textbf{rigid} (i.e., constant) through $\e$-perturbations of $f$. Let $[a,b)$ be the lifespan interval of an $n$-dimensional homology class $[\alpha].$ Our \textbf{main results} are the following (under suitable assumptions).
\begin{enumerate}
\item Theorem \ref{ThmConseq1}: The terminal $(n+1)$-dimensional simplex of $[\alpha]$ is rigid for $\e$-perturbations of $f$ if
\begin{itemize}
\item no class of $H_{n+1}$ is born on $(b,b+2\e]$ and 
\item no class of $H_n$ terminates on $[b-2\e,b)$.
\end{itemize}
\item Theorem \ref{ThmBarcodeFinal}: A version of Theorem \ref{ThmConseq1} for significant bars in the barcode. One of the main \textbf{advantages} of this result is that rigidity can be deduced solely from the barcode, without looking at the underlying filtration. (For details on barcodes of persistent homology see Preliminaries.)

\end{enumerate}

Theorem \ref{ThmConseq1} describes the two ways in which non-rigidity of a terminal simplex may occur: either via sequentially critical pair of simplices or independently critical pair of simplices. As a result, we are not only able to provide bounds on the region of rigidity, but also to locate a simplex ($\Delta_2$ in Theorem \ref{ThmConseq1}) appearing as a new terminal simplex for $[\alpha]$ in the region of non-rigidity. For a demonstration within the context of persistence diagrams and barcodes (defined in Preliminaries) see Figure \ref{FigMainIntro1g} and Figure \ref{FigMainIntro2g} following Theorem \ref{ThmBarcodeFinal}.

On the other hand, our results contribute to a new interpretation of the structure of persistent homology. So far, persistent homology has been known to encode topological information of the space at small scales \cite{Haus, Lat, ZV3}, intrinsic combinatorial structure of filtrations (such as Rips complexes) \cite{AA, Ad5}, proximity of spaces via the stability result \cite{ZVCounterex}, geometric properties of spaces  \cite{ZV2}, shortest homology basis \cite{ZV}, spaces of contraction \cite{ZVCont}, filling radius \cite{Memoli}, curvature \cite{Bubenik2020}, width of homology class \cite{ACos}, and more. Our results imply that,  to a degree, persistent homology encodes rigidity of terminal simplices.

\section{Preliminaries}


We first introduce the setup of persistent homology, see \cite{EdelsHarer} for details. Throughout this paper we assume that $K$ is a finite simplicial complex and $f\colon K \to \mathbb{R}$ is an injective \textbf{filtration function} on $K$ (if $\sigma$ is a face of $\tau$ then $f(\sigma)< f(\tau)$ for all $\sigma, \tau\in K$). As such $f$ encodes an \textbf{order} on the simplices of $K$. For example, in the original persistent homology algorithm \cite{EdelsZomo} such an order is used to arrange simplices in the boundary matrix. A simplex in $K$ is (inclusion) \textbf{maximal} if it is not a proper face of any simplex.

Given two injective filtration functions $f$ and $g$, we define the \textbf{distance} between them as
$$||f-g||_\infty = \max_{\sigma\in K}|f(\sigma)-g(\sigma)|.$$
Function $g$ corresponds to some permutation of the ordering of simplices encoded by $f$. 

The \textbf{sublevel sets} of $f$ are subcomplexes of $K$ defined for all $r\in\mathbb{R}$ as the pre-images $K_r^f = f^{-1}((-\infty, r])$. We can also define $K^f_\infty = f^{-1}((-\infty, \infty)) = K$. The notation $K^f$ denotes the standard \textbf{sublevel filtration} of $K$ obtained through $f$, i.e., the collection of subcomplexes $\{K^f_r\}_{r\in \R}$ along with the natural inclusions 
$$
\iota_{q,r} \colon K^f_q \to K^f_r
$$
for all $q\leq r$. Applying homology $H_n$ as a functor to a filtration we obtain a \textbf{persistence module}, i.e., a collection of vector spaces $\{H_n(K^f_r)\}_{r\in \R}$ along with the inclusions induced linear maps
$$\iota^*_{q,r} \colon H_n(K^f_q) \to H_n(K^f_r)$$
for all $q\leq r$.
All homology groups are assumed to be with coefficients in a fixed field $\FF$ and therefore not mentioned in the notation for homology.

Given a non-trivial homology element $[\alpha]\in H_n(K^f_r)$ for some $r$, we define:
\begin{description}
\item [birth] $a\in \R$ of $[\alpha]$ as the infimum of levels $q \leq r$, for which there exists $[\alpha_q]\in H_n(K^f_q)$ such that $\iota^*_{q,r} [\alpha_q]=[\alpha]$. We say that $[\alpha]$ is born at $a$.
\item [termination scale] $b\in \R \cup \{\infty\}$ of $[\alpha]$ as the infimum of levels $q \geq r$, for which $\iota^*_{r,q} [\alpha]=0$, or $\infty$ if such levels do not exist. We say that $[\alpha]$ is terminated at $b$. 
\end{description}
Note that the termination scale is defined differently than death in \cite{EdelsHarer}.
Given our setup of sublevel complexes defined through preimages of closed intervals, the infima in the definition of birth and termination scale are always attained if finite. 
Since $f$ is injective, at most one simplex is added at each level $r$. Consequently each $(n+1)$-simplex either gives birth to a non-trivial homology class in dimension $n+1$ or terminates a non-trivial homology class in dimension $n$.

We next state the stability theorem and introduce the corresponding notation, see \cite{Bauer} for details.
Each persistence module obtained in our setting decomposes as a finite direct sum of \textbf{interval modules} $\FF_{[a_i,b_i)}$, where persistence module $\FF_{[a,b)}$ for $a<b$ is a collection of vector spaces $\{V_r\}_{r\in \R}$, with:
\begin{itemize}
\item  $V_r=\FF$ for $r\in [a,b)$,
\item  $V_r=0$ for $r\notin [a,b)$ and
\item the bonding linear maps $V_r \to V_{r'}$ being identities for parameters $r<r'$ from $[a,b)$.
\end{itemize}
Note that the type of endpoints of intervals (closed on the left, open on the right) is a consequence of our setup of a filtration, i.e., a sublevel filtration of an injective filtration function on a finite simplicial complex. The collection of intervals $[a_i,b_i)$ is called a \textbf{barcode} and a single interval in this setting is referred to as a \textbf{bar}. 

Given injective filtration functions $f$ and $g$ and $n\in \{0,1,\ldots\}$, assume 
$$
\mathcal{M}=\{H_n(K^f_r)\}_{r\in \R} = \bigoplus_{i\in I} \FF_{[a_i,b_i)}
$$
and
$$
\mathcal{M}'=\{H_n(K^g_r)\}_{r\in \R} = \bigoplus_{j\in J} \FF_{[a'_j,b'_j)}
$$
are decompositions of persistence modules $\mathcal{M}$ and $\mathcal{M}'$ into interval modules. The \textbf{bottleneck distance} between $\mathcal{M}$ and $\mathcal{M}'$ is the infimum of $\e>0$, for which there exists a bijection $\f \colon I' \to J'$ for subsets $I' \subseteq I$ and $J' \subseteq J$ such that:
\begin{itemize}
\item $|a_i - a'_{\f(i)}| \leq \e$ and $|b_i - b'_{\f(i)}| \leq \e$ for all $i\in I'$,
\item $|a_i-b_i | \leq 2\e$ for all $i\in I\setminus I'$ and
\item $|a'_j-b'_j | \leq 2\e$ for all $j\in J\setminus J'$.
\end{itemize}
The stability theorem states that if $||f-g||_\infty \leq \e$, then the bottleneck distance between $\mathcal{M}$ and $\mathcal{M}'$ is at most $\e$, see \cite{EdelsHarer} for details.

\subsection{Filtration manipulation}
\label{SubsFM}

The following propositions explain local adjustments to filtration functions that result in a predetermined permutation of a collection of simplices.

\begin{proposition}
\label{switch}
Let $\sigma_1$ and $\sigma_2$ be two $n$-dimensional simplices in a finite simplicial complex $K$ and let $f$ be an injective filtration function on $K$. Assume that for some $\e > 0$ we have
$$f(\sigma_1) < f(\sigma_2)< f(\sigma_1)+2\e.$$
Then there exists an injective filtration function $g$ on $K$ such that $||f-g||_\infty \leq \e$ and $g(\sigma_2) < g(\sigma_1)$.
\end{proposition}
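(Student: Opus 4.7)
The plan is to leave $f$ unchanged outside the band
$$T:=\{\sigma\in K : f(\sigma_1)\le f(\sigma)\le f(\sigma_2)\}$$
and to place every $g$-value on $T$ into a small common open interval around the midpoint $m:=(f(\sigma_1)+f(\sigma_2))/2$. Writing $A=f(\sigma_1)$, $B=f(\sigma_2)$, and $\eta:=\e-(B-A)/2>0$, the key observation is that the open interval $J:=(m-\eta,m+\eta)=(B-\e,A+\e)$ is contained in $[f(\sigma)-\e,f(\sigma)+\e]$ for every $\sigma\in T$. Consequently any placement of the $g$-values of $T$ into $J$ automatically satisfies $\|f-g\|_\infty\le\e$, and the only remaining task is to choose the values in $J$ so that $g$ is injective and monotone under the face relation.

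Inside $T$, I would prescribe the order of the $g$-values by a linear extension $\pi$ of the partial order obtained from the face relation $\subseteq$ on $T$ by adjoining the single extra relation $\sigma_2\prec\sigma_1$. This extended relation has no cycles: any cycle would take the form $\sigma_2\prec\sigma_1\subseteq\cdots\subseteq\sigma_2$, which by transitivity of $\subseteq$ would force $\sigma_1\subseteq\sigma_2$, contradicting $\dim\sigma_1=\dim\sigma_2=n$. Hence such a linear extension $\pi$ exists, with $\sigma_2$ preceding $\sigma_1$.

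The main technical obstacle is that simplices of $T$ may have faces or cofaces outside $T$ whose $f$-values also lie inside $J$ (this can happen only when $B-A<\e$, making the naive midpoint-swap insufficient). To control these, for each $\sigma\in T$ let
$$p(\sigma):=\max\{f(\rho):\rho\subsetneq\sigma,\ \rho\notin T\}\quad\text{and}\quad q(\sigma):=\min\{f(\tau):\sigma\subsetneq\tau,\ \tau\notin T\},$$
with the conventions $-\infty$ and $+\infty$ for empty sets. Faces of $\sigma\in T$ that leave $T$ have $f$-value below $A$ and cofaces that leave $T$ have $f$-value above $B$, so $p(\sigma)<A<B<q(\sigma)$, and the intersection $I^*:=J\cap\bigcap_{\sigma\in T}(p(\sigma),q(\sigma))$ is a non-empty open sub-interval of $J$. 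I would distribute distinct $g$-values on the elements of $T$ inside $I^*$, increasing in $\pi$-order and avoiding the (finitely many) $f$-values of simplices outside $T$ that happen to land in $I^*$, so that $g$ remains injective.

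Verification is then a routine case analysis on face pairs $(\rho,\tau)$ with $\rho\subsetneq\tau$: if both lie outside $T$, monotonicity follows from $g=f$ there; if both lie in $T$, from the choice of $\pi$; and in the mixed cases, from $I^*\subseteq(p(\sigma),q(\sigma))$ for each $\sigma\in T$ together with $g=f$ outside $T$. The bound $\|f-g\|_\infty\le\e$ is the inclusion $I^*\subseteq J\subseteq[f(\sigma)-\e,f(\sigma)+\e]$, and $g(\sigma_2)<g(\sigma_1)$ is immediate from the defining property of $\pi$.
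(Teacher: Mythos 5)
Your proof is correct, but it takes a genuinely different route from the paper's. The paper's argument is a minimal, local shift: it raises the $f$-values of the upper set $U=\us(\sigma_1)$ (all cofaces of $\sigma_1$) by $\e$, lowers the lower set $L=\ls(\sigma_2)$ (all faces of $\sigma_2$) by $\e$, and leaves everything else alone; the swap $g(\sigma_2)=f(\sigma_2)-\e<f(\sigma_1)+\e=g(\sigma_1)$ is then immediate from $f(\sigma_2)-f(\sigma_1)<2\e$, and monotonicity follows because $U$ is upward-closed and $L$ is downward-closed. Your construction instead compresses the entire $f$-band $T=f^{-1}([f(\sigma_1),f(\sigma_2)])$ into a small interval near the midpoint and reorders it with a linear extension of the face order augmented by $\sigma_2\prec\sigma_1$; the acyclicity argument and the $p(\sigma),q(\sigma)$ safety margins are all handled correctly, and the nonemptiness of $I^*$ follows since the midpoint $m$ lies in $[A,B]\cap J\subseteq(p(\sigma),q(\sigma))$ for every $\sigma\in T$. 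The paper's method is leaner (it perturbs only faces of $\sigma_2$ and cofaces of $\sigma_1$, and never runs into the boundary issue you flag, because $U$ and $L$ are defined by face relations rather than by $f$-values), while your band-compression plus linear-extension method is more disruptive but also more flexible: it realizes any prescribed reordering of the band in one stroke, and in that sense it is closer in spirit to the paper's proof of Corollary \ref{switchmany} than to its proof of this proposition.
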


\begin{proof}
Without loss of generality we can slightly decrease $\e$ so that the assumptions of the proposition still hold and that
\begin{equation}
\label{injectiveg}
\e \notin \{|f(\sigma)- f(\tau)| \;;\; \sigma, \tau \in K\} \textrm{ and }2\e \notin \{|f(\sigma)- f(\tau)| \;;\; \sigma, \tau \in K\}.
\end{equation}

Let $U=\us(\sigma_1)\subseteq K$ denote the subset of all simplices in $K$ that contain $\sigma_1$ as a face (the upper set of $\sigma_1$ in the Hasse diagram of $K$), and let $L=\ls(\sigma_2)\subseteq K$ denote the subset of all faces of $\sigma_2$ (the lower set of $\sigma_2$ in the Hasse diagram of $K$). Note that $\sigma_1\in U$ and $\sigma_2\in L$.

If $\mu\in U$, set $g(\mu) = f(\mu)+\e$. If $\mu\in L$, set $g(\mu) = f(\mu)-\e$. For all other simplices $\mu$ let $g(\mu) = f(\mu)$.

Since $\sigma_1$ and $\sigma_2$ are two distinct simplices of the same dimension, we have $U\cap L = \emptyset$, so $g$ is well-defined. It is also obvious that $|g(\mu)-f(\mu)|\leq\e$ for all $\mu\in K$, so $||g-f||_\infty\leq\e$. Finally, if the value at a simplex $\mu$ has decreased (or increased) by $\e$, then the same was true for all the faces (or cofaces) of $\mu$, so $g$ is a filtration function.
The function $g$ defined this way is injective by the condition (\ref{injectiveg}).
\end{proof}

Given a collection of $k$ simplices of dimension $n$ and an arbitrary permutation $\pi\in S_k$, we do not have to switch the order of one pair at a time but can instead mix them up all at once. Corollary \ref{switchmany} shows one way of doing this.

\begin{corollary}
\label{switchmany}
Let $\sigma_1,\ldots,\sigma_k$ be a selection of $n$-dimensional simplices in a finite simplicial complex $K$, $\pi\in S_k$ an arbitrary permutation of the indices $\{1,2,\ldots,k\}$ and $f$ an injective filtration function on $K$. Assume that for some $\e > 0$ we have
$$f(\sigma_1) < f(\sigma_2)< \ldots < f(\sigma_k) < f(\sigma_1)+2\e.$$
Then there exists an injective filtration function $g$ on $K$ such that $||f-g||_\infty \leq \e$ and
$$g(\sigma_{\pi(1)}) < g(\sigma_{\pi(2)})< \ldots < g(\sigma_{\pi(k)}).$$
\end{corollary}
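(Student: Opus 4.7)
My plan is to generalize the construction in the proof of Proposition \ref{switch} to handle all $k$ simplices simultaneously, rather than attempting an iterated pairwise-swap approach (each iterated swap would require its own $2\e$-closeness condition and the accumulated perturbation would not stay within $\e$).

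First, as in Proposition \ref{switch}, I would slightly decrease $\e$ so that the strict hypothesis $f(\sigma_1)<\cdots<f(\sigma_k)<f(\sigma_1)+2\e$ is preserved and so that $\e, 2\e$ avoid all values of $|f(\sigma)-f(\tau)|$, which will later give injectivity of $g$. Since $f(\sigma_k)-f(\sigma_1)<2\e$, the interval $[f(\sigma_k)-\e,\,f(\sigma_1)+\e]$ is nonempty, and I pick $k$ generic values $t_1<t_2<\cdots<t_k$ inside it. The crucial point is that each $t_i$ lies in $[f(\sigma_j)-\e,f(\sigma_j)+\e]$ for \emph{every} $j$, so setting $g(\sigma_{\pi(i)})=t_i$ is consistent with the $\e$-perturbation bound irrespective of $\pi$. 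For the remaining simplices I define $g(\mu)=f(\mu)+\e$ whenever $\mu\in\us(\sigma_i)\setminus\{\sigma_i\}$ for some $i$; $g(\mu)=f(\mu)-\e$ whenever $\mu\in\ls(\sigma_i)\setminus\{\sigma_i\}$ for some $i$; and $g(\mu)=f(\mu)$ otherwise. Because all $\sigma_i$ share the dimension $n$, these three rules have pairwise disjoint domains: a simplex cannot be simultaneously a strict coface of some $\sigma_i$ (dimension $>n$) and a strict face of some $\sigma_j$ (dimension $<n$), and no $\sigma_j$ can be a strict face or coface of another $\sigma_i$.

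It remains to verify the three desired properties of $g$. The bound $\|g-f\|_\infty\leq\e$ and the ordering $g(\sigma_{\pi(1)})<\cdots<g(\sigma_{\pi(k)})$ are immediate from the construction, and injectivity follows from the genericity of $\e$ and the $t_i$, exactly as in Proposition \ref{switch}. The main technical step, and the principal obstacle, is confirming that $g$ is still a filtration function, i.e., $g(\nu)\leq g(\mu)$ whenever $\nu$ is a face of $\mu$. Beyond the cases already appearing in Proposition \ref{switch}, the genuinely new situations are those where $\nu=\sigma_i$ and $\mu$ is a strict coface of some $\sigma_j$ with $j\neq i$, handled by
$$g(\mu)=f(\mu)+\e\;\geq\;f(\sigma_i)+\e\;\geq\;t_{\pi^{-1}(i)}=g(\sigma_i),$$
using that $\sigma_i\subseteq\mu$ implies $f(\sigma_i)\leq f(\mu)$ and that $t_{\pi^{-1}(i)}\leq f(\sigma_1)+\e\leq f(\sigma_i)+\e$; the symmetric case ($\mu=\sigma_i$, $\nu$ a strict face of some $\sigma_j$) is identical. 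All remaining face relations either sit entirely inside the untouched region where $g=f$, or reduce to the same shift argument used in Proposition \ref{switch}.
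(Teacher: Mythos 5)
Your proposal is correct and follows essentially the same route as the paper: reassign the $k$ simplices to fresh values inside the common window $[f(\sigma_k)-\e,\,f(\sigma_1)+\e]$ (where every value is within $\e$ of every $f(\sigma_i)$), then shift upper sets up and lower sets down to preserve the filtration property, securing injectivity by a preliminary generic shrinking of $\e$. The only difference is cosmetic: the paper shifts only the cofaces of the simplices whose values increased (by $\e^+<\e$) and only the faces of those whose values decreased (by $\e^-<\e$), whereas you shift all cofaces and faces by the full $\e$, which works just as well.
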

\begin{proof}
If we write $a = f(\sigma_k)-\e$ and $b = f(\sigma_1)+\e$, then $(a,b\,]$ is an interval of length $\delta < 2\e$. Divide this interval into $k+1$ equal pieces of length $\frac{\delta}{k+1}$ and define
\begin{eqnarray*}
g(\sigma_{\pi(1)}) &=& a +\frac{\delta}{k+1},\\
g(\sigma_{\pi(2)}) &=& a +\frac{2\delta}{k+1},\\
 &\vdots & \\
g(\sigma_{\pi(k-1)}) &=& a +\frac{(k-1)\delta}{k+1},\\
g(\sigma_{\pi(k)}) &=& a +\frac{k\delta}{k+1}.
\end{eqnarray*}
It is not difficult to show that
$|g(\sigma_{i})-f(\sigma_{i})| \leq \e - \frac{\delta}{k+1}$.
Define 
\begin{eqnarray*}
I^+ &=& \{i\in\{1,2,\ldots,k\}\;;\; g(\sigma_i)-f(\sigma_i) > 0\},\\
I^- &=& \{i\in\{1,2,\ldots,k\}\;;\; g(\sigma_i)-f(\sigma_i) < 0\},\\
\e^+ &=& \max\{g(\sigma_i)-f(\sigma_i)\;;\; i\in I^+\},\\
\e^- &=& \max\{|g(\sigma_i)-f(\sigma_i)|\;;\; i\in I^-\},\\
\end{eqnarray*}
\vskip-1.2cm
$$U = \bigcup_{i\in I^+}\us(\sigma_i)\quad\textrm{and}\quad L = \bigcup_{i\in I^-}\ls(\sigma_i).$$
It is easy to see that $\e^+ <\e$ and $\e^- <\e$.
For all $\mu\in U\setminus\{\sigma_1,\ldots,\sigma_k\}$ let $g(\mu) = f(\mu)+\e^+$. For all $\mu\in L\setminus\{\sigma_1,\ldots,\sigma_k\}$ let $g(\mu) = f(\mu)-\e^-$. Finally, for all other $\mu\in K$ set $g(\mu) = f(\mu)$. Then $g$ is a filtration function with all the desired properties except (perhaps) injectivity, but it can be made injective either with a small perturbation of values (upholding the property of being a filtration function), or with a minor decrease in $\e$ at the beginning (as was done in the proof of Proposition \ref{switch}).
\end{proof}

\section{Rigidity for homology classes}

As before, let $f$ be an injective filtration function on a simplicial complex $K$ and let $n$ be a positive integer. Assume that an $n$-cycle $\alpha$ is created when its last $n$-dimensional simplex is added at level $a$, and that the homology class $[\a]\in H_n(K_a^f)$ is born at $a$ and terminated at $b$. A \textbf{nullhomology} of $\alpha$ at $b$ is an $(n+1)$-chain in $K_b^f$, whose boundary is $\alpha$.

Choose $\e<\frac{b-a}{2}$. Then for each $\e$-perturbation $g$ of $f$ the class $[\a]$ exists and is non-trivial in $H_n(K_r^g)$ at least for $r\in [a+\e, b-\e)$.

For every injective filtration function $g$ at distance at most $\e<\frac{b-a}{2}$ from $f$ let $\Delta_{g,\a}$ denote the $(n+1)$-simplex in $K^g$ that terminates $[\a]$. Define
$$\Sigma_{\e} = \{\Delta_{g,\a}\;;\; \textrm{$g$ an injective $\e$-perturbation of $f$}\}.$$
Note that $g(\Delta_{g,\a})\in [b-\e, b+\e]$ and $f(\Delta_{g,\a})\in [b-2\e,b+2\e]$. We say that $[\alpha]$ is  $\e$-\textbf{terminally-rigid} if $|\Sigma_{\e}|=1$.

Since $f$ is injective and $K$ is finite, we can define
$$\ir(f) = \min\{|f(\sigma)-f(\tau)|\;;\;\sigma \neq \tau\in K\}>0,$$
the \textbf{injectivity radius} of $f$.

The aim of this paper is to study algebraic effects of terminal non-rigidity. Homology class $[\alpha]$ as defined above is $\e$-terminally-rigid for small $\e$, certainly for $\e<\frac{\ir(f)}{2}$ since $\frac{\ir(f)}{2}$-perturbations of $f$ retain the order of simplices appearing in the filtration of $K$. For larger $\e$ the class $[\alpha]$ is typically not terminally-rigid. We intend to focus on the region of $\e$ in which the initial form of non-rigidity occurs.

Let $d = \dim(K)$ and let $\sigma_j^{(i)}$ for $j=1,\ldots,k_i$ be the simplices of $K$ of dimension $i$. Then $f$ determines a linear ordering on the set
$$\{\sigma_1^{(0)},\ldots,\sigma_{k_0}^{(0)},\ldots,\sigma_1^{(d)},\ldots,\sigma_{k_{d}}^{(d)}\}$$
of all simplices of $K$. We can encode this ordering as a permutation $\overline{\pi}^f\in S_{k_0+k_1+\ldots+k_d}$ of the pre-determined ordering of simplices given above. A different injective filtration function $h$ defines a potentially different ordering of the simplices of $K$, corresponding to a potentially different permutation $\overline{\pi}^h$. Note that not all permutations in the symmetric group $S_{k_0+k_1+\ldots+k_d}$ correspond to filtration functions.

If we limit ourselves to only the simplices of a single dimension, however, all possible permutations of those simplices can be realized by Corollary \ref{switchmany}. Our filtration function $f$ determines a permutation $\pi_{n+1}^f\in S_{k_{n+1}}$, defined by the induced linear ordering of the set
$$\{\sigma_1^{(n+1)},\ldots,\sigma_{k_{n+1}}^{(n+1)}\}$$
of $(n+1)$-dimensional simplices of $K$. We will assume that the simplices of $K$ have been ordered in such a way that $\pi_{n+1}^f$ is the identity permutation. To unburden the notation we will write $\pi_{n+1}^f = \pi^f$ and $m=k_{n+1}$ from now on.

Any injective filtration functions that correspond to the same permutation generate the same boundary matrices in the classical matrix reduction algorithm for persistent homology, although the labels (function values) of simplices generally differ.

Also recall that for each $\e\in D=(0,\frac{b-a}{2}]$ and each injective filtration function $g$ with $||f-g||_\infty \leq \e$ the class $[\alpha]$ has a non-trivial lifespan in $K^g$.

Now, let us consider
\begin{itemize}
\item the function $\e \mapsto \Sigma_\e$ defined on $D$ that returns the collection of all terminal simplices of $\e$-perturbations of $f$ that terminate $[\alpha]$ and
\item the function $\e \mapsto \Pi_\e$ defined on $D$ that returns the collection of all permutations corresponding to $\e$-perturbations of $f$.
\end{itemize}

The functions $\e\mapsto|\Sigma_\e|$ and $\e\mapsto|\Pi_\e|$ defined on $D$
\begin{enumerate}
\item are increasing,
\item have values in the discrete sets $\{1,2,\ldots, m\}$ and $\{1,2,\ldots, m!\}$, respectively, and
\item attain the value of $1$ for small $\e$ (at least for all $\e<\frac{\ir(f)}{2}$).
\end{enumerate}

As a result, both functions are increasing step functions on $D$ as each of them partitions $D$ into finitely many intervals, such that the function is constant on each interval of that partition. It is also apparent that a change in $\Sigma_\e$ may occur at some $\e$ only if $\Pi_\e$ also changes at the same parameter. The next lemma shows that $|\Sigma_\e|$ and $|\Pi_\e|$ are lower semi-continuous (the intervals of the two partitions of $D$ are open on the left and closed on the right).

\begin{lemma}
\label{LemmaUSC}
Assume $f$ is an injective filtration function on a finite simplicial complex $K$ and assume that for the $n$-cycle $\alpha$ appearing in the filtration at level $a$ the homology class $[\alpha]\in H_n(K^f_a)$ is born at $a$ and terminated at $b$. For each $t\in D = \left(0,\frac{b-a}{2}\right]$ there exists $\delta>0$ such that $\Sigma_\e$  and $\Pi_\e$ are constant on $(t-\delta,t]$.
\end{lemma}

\begin{proof}
Choose finitely many injective filtration functions generating $\Pi_t$. Each of them can be brought a bit closer to $f$ because of injectivity.
\end{proof}

We next discuss potential points of discontinuity of $|\Sigma_\e|$ and $|\Pi_\e|$. The following lemma shows that they always correspond to exactly half the distance between two (not necessarily consecutive) values of $f$.

\begin{lemma}
\label{LemmaDiscontinuity}
Assume $f$ is an injective filtration function on a finite simplicial complex $K$ and assume that for the $n$-cycle $\alpha$ appearing in the filtration at level $a$ the homology class $[\alpha]\in H_n(K^f_a)$ is born at $a$ and terminated at $b$. Suppose $t\in D\left(0,\frac{b-a}{2}\right]$ is a point of discontinuity of $\Pi_\e$. Then there exist $i\neq j$ such that $2t=|f(\sigma_i)-f(\sigma_j)|$. 
\end{lemma}

\begin{proof}
As a consequence of Lemma \ref{LemmaUSC} there exists a $\delta>0$ such that $\Pi_\e$ is constant on $(t,t+\delta]$. Since $t$ is the point of discontinuity, there exists a permutation $\nu \in \Pi_{t+\delta}\setminus \Pi_t$. For each $N\in \mathbb{N}$ choose an injective filtration function $g_N$ inducing $\nu$ and satisfying $||f-g_N||_\infty \leq t+ 1/N$. Without loss of generality we may assume that $\{g_N(\sigma_i)\}_{N\in \mathbb{N}}$ converges for each $i$ and define $f_\infty$ as the limiting function (if any of them do not converge we can choose a converging subsequence). Note that while $f_\infty$ is a filtration function, it cannot be injective, because it would have corresponded to the premutation $\nu$ which is not in $\Pi_t$ while $||f-f_\infty||_\infty \leq t$ by definition.
Let us try fixing the values of $f_\infty$ to make it injective and see where exactly that fails.

Assume that $q$ is a value attained by $f_\infty$ at more than one simplex. Let $A_q$ denote the collection of such simplices. If there exist two distinct $\sigma_i, \sigma_j$ such that $q = f(\sigma_i)-t = f(\sigma_j)+t$, then we have found two simplices that satisfy the desired condition. If not, there are two possible reasons for that.
\begin{itemize}
\item If there is no $i$ such that $q=f(\sigma_i)-t$, then the $f$-values of all simplices in $A_q$ lie on $[q-t,q+t)$. In this case the $f_\infty$ values of all the simplices in $A_q$ can be slightly decreased by Corollary \ref{switchmany} so that:
\begin{itemize}
\item $f_\infty(\sigma)\neq f_\infty(\sigma')$ for all $\sigma\neq\sigma'$ in $A_q$,
\item the relative position of each simplex of $A_q$ to the simplices outside of $A_q$ is the same as in $\nu$,
\item simplices of $A_q$ appear in the same order as in $\nu$ and
\item the resulting filtration function $f'_\infty$ satisfies $||f-f'_\infty||_\infty \leq t$. 
\end{itemize}
Since this would make $f_\infty$ injective, it cannot happen for all $q$ and there must be a simplex $\sigma_i$ such that $q = f(\sigma_i)-t$.
\item The case where there is no $j$ such that $q = f(\sigma_j)+t$ and the $f$-values of simplices from $A_q$ all lie on $(q-t,q+t]$ can be handled similarly by a small local increase of the values of $f_\infty$.
\end{itemize}
We can conclude that there exist $i\neq j$ such that $q = f(\sigma_i)-t = f(\sigma_j)+t$.
\end{proof}

\begin{definition}
An injective filtration function $f$ is called \textbf{generic} if 
$$|f(\sigma_i)-f(\sigma_j)|=|f(\sigma_{i'})-f(\sigma_{j'})|$$ 
implies
$\{i,j\}=\{i',j'\}$.
\end{definition}

\begin{proposition}
\label{PropStep2}
Let $f$ be a generic injective filtration function on a finite simplicial complex $K$. Assume that for the $n$-cycle $\alpha$ appearing in the filtration at level $a$ the homology class $[\alpha]\in H_n(K^f_a)$ is born at $a$ and terminated at $b$ and let $D\left(0,\frac{b-a}{2}\right]$. Let $t_0=\max\{x\in D; |\Sigma_x|=1\}$ and $\Sigma_{t_0}=\{\Delta_1\}$. By Lemma \ref{LemmaDiscontinuity} there exist indices $i,j$ satisfying $q=f(\sigma_{j})-t_0=f(\sigma_{i})+t_0$.
Choose $t>t_0$ such that $|\Sigma_{t}|$ is as small as possible, i.e., $|\Sigma_{t}|=\lim_{x \searrow t_0} |\Sigma_x|$.
Then:
\begin{enumerate}
 \item[(1)] $\Delta_1\in \{\sigma_{i}, \sigma_{j}\}$.
 \item[(2)] $\Delta_1$ is a maximal simplex in $K^f_{f(\Delta_1)+2s}$ for all $s<t_0$.
 \item[(3)] $\Delta_1=\sigma_{j}$ implies $\{\sigma_{i}, \sigma_{j}\} \subseteq \Sigma_{t}$ and $\sigma_{i}$ terminates a class in $H_*(K^f)$. 
 \item[(4)] $\Delta_1=\sigma_{i}$ implies $\{\sigma_{i}, \sigma_{j}\} \subseteq \Sigma_{t}$ and $\sigma_{j}$ creates a class in $H_*(K^f)$. 
 \end{enumerate}
\end{proposition}

\begin{definition}
 In case (3) of Proposition \ref{PropStep2} (i.e., when $\Delta_1=\sigma_j$)  we say simplices $\sigma_i$ and $\sigma_j$ are \textbf{sequentially critical}, see Figure \ref{example5figure2}.
 
 In case (4) of Proposition \ref{PropStep2} (i.e., when $\Delta_1=\sigma_i$) we say simplices $\sigma_i$ and $\sigma_j$ are \textbf{independently critical}, see Figure \ref{example5figure1}.

\end{definition}

\begin{proof}
Without loss of generality we may choose $t$ so that no element of the form $\frac{1}{2}|f(\sigma_i)-f(\sigma_j)|$ lies on $(t_0,t]$. 
We will further develop the limiting argument presented in Lemma \ref{LemmaDiscontinuity}. Choose $\nu \in \Pi_{t}\setminus \Pi_{t_0}$. For each $N\in \mathbb{N}$ choose a filtration function $g_N$ inducing $\nu$ and satisfying $||f-g_N||_\infty \leq t_0+1/N$. Without loss of generality we may assume $\{g_N(\sigma_i)\}_{N\in \mathbb{N}}$ converges for each $i$ and define $f_\infty$ as the limiting function. Using local modifications as in the proof of Lemma \ref{LemmaDiscontinuity} we may assume that $f_\infty^{-1}(x)$ contains at most one simplex for each $x \neq q$. Define $A_q=f_\infty^{-1}(q)$. Choose an open interval $H$ around $q$ such that
$$H \subseteq \left(\bigcap_{\sigma\in \{\sigma_i, \sigma_j\}} (f(\sigma)-t,f(\sigma)+t)\right)\ \cap \  \left(\bigcap_{\sigma\in A_q\setminus\{\sigma_{i}, \sigma_{j}\}} (f(\sigma)-t_0,f(\sigma)+t_0)\right)$$
and $H \cap \mathrm{im}(f_\infty)=\{q\}$. 

Redefining $f_\infty$ on $A_q$ by any injective assignment of values in $H$ respecting dimension (i.e., faces of a simplex are assigned smaller values than the simplex) we obtain an injective filtration function at a distance at most $t$ from $f$. As a result, any dimension-respecting permutation of elements in $A_q$, nested between other simplices as determined by $f_\infty$, determines a permutation in $\Pi_{t}$. We will call such a permutation a  \textbf{local} $A_q$-\textbf{perturbation} of $\nu$.
On the other hand, redefining $f_\infty$ on $A_q$ by any injective assignment of values in $H$ respecting dimension such that $f_\infty(\sigma_{i}) < q < f_\infty(\sigma_{j})$, we obtain an injective filtration function at distance at most $t_0$ from $f$. As a result, any dimension respecting permutation of elements in $A_q$ nested between other simplices as determined by $f_\infty$, in which $\sigma_{i}$ appears before $\sigma_{j}$, determines a permutation in $\Pi_{t_0}$. We will call such a permutation a  \textbf{restricted local} $A_q$-\textbf{perturbation} of $\nu$. Roughly speaking, the difference between $\Pi_{t_0}$ and $\Pi_{t}$ is that the later may swap $\sigma_{i}$ and $\sigma_{j}$. Observe also that $f(A_q) \subset [f(\sigma_i), f(\sigma_j)]$.
 
Recall that $\Sigma_{t_0}=\{\Delta_1\}$ and fix a permutation $\nu\in\Pi_{t} \setminus \Pi_{t_0}$ (and the corresponding adjusted limit $f_\infty$ of filtration functions) such that the corresponding simplex terminating $\alpha$ is $\Delta_2\neq \Delta_1$. Observe that $\nu$ swaps $\sigma_{i}, \sigma_{j}$, i.e., $\sigma_{i}$ appears after $\sigma_{j}$ in this permutation.
 
\begin{enumerate}
\item[(1)] We will now show that $\Delta_1\in \{\sigma_{i}, \sigma_{j}\}.$ Assume that $\Delta_1\notin \{\sigma_{i}, \sigma_{j}\}.$ 
\begin{enumerate}
 \item Then $\Delta_1$ may appear as the first of the simplices of $A_q$ in some restricted local $A_q$ perturbation of $\nu$. In particular, $\Delta_1$ and the simplices appearing before $A_q$ contain a nullhomology of $[\alpha]$.
 \item  On the other hand, $\Delta_1$ may appear as the last of the simplices of $A_q$ in some restricted local $A_q$ perturbation of $\nu$. In particular, $A_q\setminus\{\Delta_1\}$ and the simplices appearing before $A_q$ do not contain a nullhomology of $[\alpha]$.
\end{enumerate}

As a result, $\Sigma_{t}$ can only contain $\Delta_1$ as in permutation $\nu$, class $[\alpha]$ becomes trivial by the time $\Delta_1$ is added (by (a)) but not before (by (b)). This is a contradiction. This proves our claim.
\item[(2)]  If $\Delta_1$ was not maximal but rather a codimension $1$ face of a simplex $\widehat \Delta$ in $K^f_{f(\Delta_1)+2s}$, then by Corollary \ref{switchmany} we  could construct an injective filtration function $g$ of $K$ with $||f-g||_\infty < t_0$ in which the boundary of $\widehat \Delta$ would appear before $\Delta_1$, while the simplices in $K^f$ appearing before $\Delta_1$ would also appear before $\Delta_1$ in $K^g$. This would mean that $\Delta_1$ would not be the simplex terminating $[\alpha]$ in $K^g$ as it could be replaced by the same-dimensional simplices of $\di \widehat \Delta$. This contradicts the fact that $\Sigma_{t_0}=\{\Delta_1\}$.
\item[(3)] 
Let $\Delta_1=\sigma_{j}$.
\begin{enumerate}
	\item There is a restricted local $A_q$ perturbation of $\nu$ in which $\sigma_{j}$ appears as the last simplex. As $\Sigma_{t_0}=\{\sigma_{j}\}$, the simplices of $A_q\setminus\{\sigma_{j}\}$ and all the simplices appearing before them do not contain a nullhomology of $\alpha$, i.e., each nullhomology induced by a local $A_q$-perturbation of $\nu$ contains $\sigma_{j}$.
	\item On the other hand, there is a restricted local $A_q$ perturbation of $\nu$ in which $\sigma_{j}$ appears as the second simplex, right after $\sigma_{i}$. This means that $\sigma_{i}, \sigma_{j}$ and the simplices appearing before $A_q$ contain a nullhomology of $\alpha$. 
	\item There is a local $A_q$-perturbation of $\nu$ in which $\sigma_{i}$ appears as the second simplex, right after $\sigma_{j}$. By (2) the terminal simplex of $\alpha$ in this perturbation is either $\sigma_i$ or $\sigma_j$. If $\sigma_{j}$ was the terminal simplex of $\alpha$ in this perturbation,  $ \sigma_{j}$ and the simplices appearing before $A_q$ would contain a nullhomology of $\alpha$. 
Consequently (a) would imply the terminal simplex of $\alpha$ in any local $A_q$ perturbation of $\nu$ is $\sigma_j$ implying $|\Sigma_{t}|=1,$ a contradiction. According to (b), $\sigma_{i}$ appears as the terminal simplex for this permutation implying $\{\sigma_{i}, \sigma_{j}\} \subseteq \Sigma_{t}$.
  \end{enumerate}
  
If $\sigma_{i}$ was a birth simplex, the term $\partial \sigma_{i}$ could have been replaced by a combination of boundaries of simplices appearing before $A_q$. We would thus transform the nullhomology mentioned in (c), which consists of terms containing $\sigma_{i}, \sigma_{j}$ and simplices appearing before $A_q$, into a nullhomology consisting of terms  $\sigma_{j}$ and simplices appearing before $A_q$. Such a nullhomology does not exist, as was mentioned in (c), so $\sigma_{i}$ must be a terminal simplex in $K^f$.
\item[(4)] Let $\Delta_1=\sigma_{i}$.

\begin{enumerate}
 	\item There is a restricted local $A_q$-perturbation of $\nu$ in which $\sigma_{j}$ appears as the last simplex and $\sigma_{i}$ appears as the second simplex from the last. As $\Sigma_{t_0}=\{\sigma_{i}\}$, the simplices of $A_q\setminus\{\sigma_{i}, \sigma_{j}\}$ and all the simplices appearing before them do not contain a nullhomology of $\alpha$, i.e., each nullhomology induced by a local $A_q$-perturbation of $\nu$ contains $\sigma_{j}$ or $\sigma_{i}$. 
	\item On the other hand, there is a restricted local $A_q$-perturbation of $\nu$ in which $\sigma_{i}$ appears as the first simplex. This means that $\sigma_{i}$ and the simplices appearing before $A_q$ contain a nullhomology of $\alpha$.
	\item There is a local $A_q$-perturbation of $\nu$ in which $\sigma_{i}$ appears as the last simplex and $\sigma_{j}$ appears as the second simplex from the last. By (a) the terminal simplex of $\alpha$ in this perturbation is either $\sigma_i$ or $\sigma_j$. If $\sigma_{i}$ was the terminal simplex of $\alpha$ in this perturbation, then simplices of $A_q\setminus\{\sigma_{i}\}$ and all the simplices appearing before them do not contain a nullhomology of $\alpha$. By (b) this would mean $|\Sigma_{t}|=1,$ a contradiction. Thus $\sigma_{j}$ appears as the terminal simplex for this permutation implying $\{\sigma_{i}, \sigma_{j}\} \subseteq \Sigma_{t}$.
\end{enumerate}

It remains to prove that $\sigma_{j}$ is a birth simplex in $K^f$. 

By (b) we have 
$$
\alpha = \di \mu_1 \sigma_i +\di \sum_{k} \lambda_k \tau_k,
$$
with $\mu_1 \neq 0$ and $\tau_k$ being simplices appearing before $A_q$. On the other hand, (c) implies 
$$
\alpha = \di \mu_2 \sigma_j +\di \sum_{l} \lambda'_l \tau'_l,
$$
with $\mu_2 \neq 0$ and each $\tau'_l$ being a simplex from $A_q$ or appearing before $A_q$. Subtracting the equations we obtain
$$
\di \mu_2 \sigma_j =\di\Big(- \sum_{l} \lambda'_l \tau'_l+ \mu_1 \sigma_i + \sum_{k} \lambda_k \tau_k\Big).
$$
Recall that $f(A_q) \subset [f(\sigma_i), f(\sigma_j)]$ and thus the $f$-values of simplices $\tau_k$, $\tau'_l$ and $\sigma_i$ are below $f(\sigma_j)$. 
The last equality thus implies $\sigma_{j}$ is a birth simplex in $K^f$.
\end{enumerate}
\end{proof}

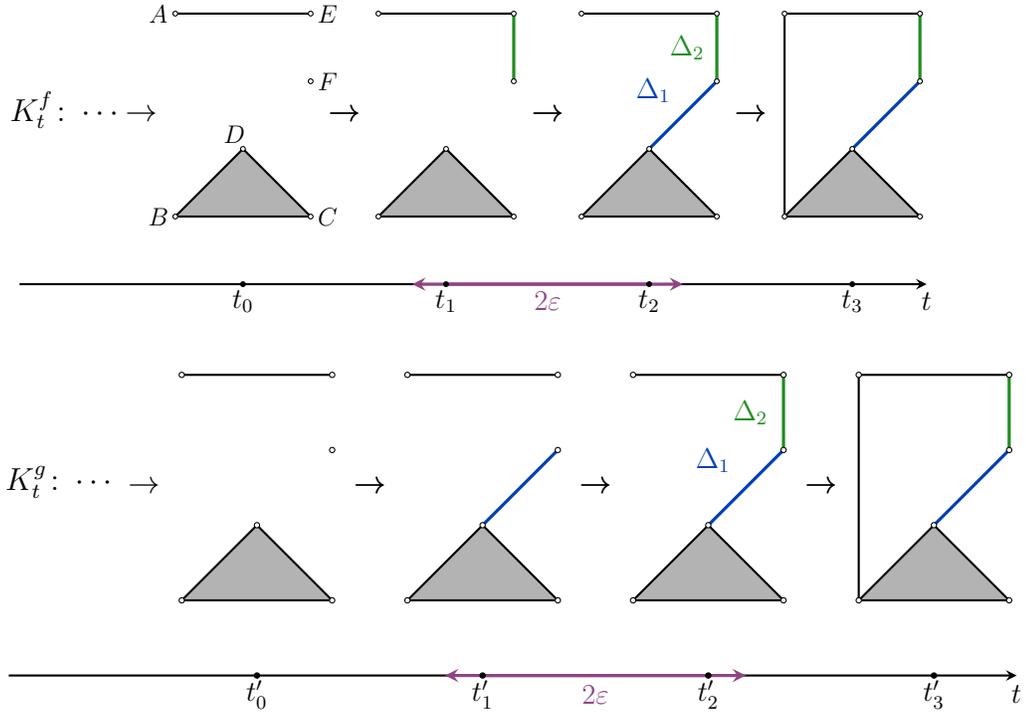
\begin{figure}[!ht]
\begin{center}\begin{tikzpicture}[outer sep=0.8pt, scale=.9, baseline=1.0cm]
\def\R{4}\def\r{0.75}\def\m{1.5}\def\kot{-60}\def\l{0.25}
\path (-1.6,3) coordinate (A0) (-1.6,0) coordinate (B0)
      (-1,3) coordinate (C0) (-1,0) coordinate (D0)
      (0,3) coordinate (A) (0,0) coordinate (B)
	  (2,0) coordinate (C) (1,1) coordinate (D)
	  (2,3) coordinate (E) (2,2) coordinate (F);	  
\draw[thick, fill=black!30!white] (B) -- (C) -- (D) -- cycle;
\draw[thick] (A) -- (E);
\draw[fill=black!0] (A) circle (1pt);\draw[fill=black!0] (B) circle (1pt);
\draw[fill=black!0] (C) circle (1pt);\draw[fill=black!0] (D) circle (1pt);
\draw[fill=black!0] (E) circle (1pt);\draw[fill=black!0] (F) circle (1pt);
\node at ($(A)+(180:\l)$) {\scalebox{0.8}{$A$}};
\node at ($(D)+(120:\l)$) {\scalebox{0.8}{$D$}};
\node at ($(E)+(0:\l)$) {\scalebox{0.8}{$E$}};
\node at ($(F)+(0:\l)$) {\scalebox{0.8}{$F$}};
\node at ($(B)+(180:\l)$) {\scalebox{0.8}{$B$}};
\node at ($(C)+(0:\l)$) {\scalebox{0.8}{$C$}};
\path (3,3) coordinate (A1) (3,0) coordinate (B1)
	  (5,0) coordinate (C1) (4,1) coordinate (D1)
	  (5,3) coordinate (E1) (5,2) coordinate (F1);	  
\draw[thick, fill=black!30!white] (B1) -- (C1) -- (D1) -- cycle;
\draw[thick] (A1) -- (E1);
\draw[very thick, color=zelena] (E1) -- (F1);
\draw[fill=black!0] (A1) circle (1pt);\draw[fill=black!0] (B1) circle (1pt);
\draw[fill=black!0] (C1) circle (1pt);\draw[fill=black!0] (D1) circle (1pt);
\draw[fill=black!0] (E1) circle (1pt);\draw[fill=black!0] (F1) circle (1pt);
\path (6,3) coordinate (A2) (6,0) coordinate (B2)
	  (8,0) coordinate (C2) (7,1) coordinate (D2)
	  (8,3) coordinate (E2) (8,2) coordinate (F2);	  
\draw[thick, fill=black!30!white] (B2) -- (C2) -- (D2) -- cycle;
\draw[thick] (A2) -- (E2);
\draw[very thick, color=zelena] (E2) -- (F2);
\draw[very thick, color=modra] (F2) -- (D2);
\draw[fill=black!0] (A2) circle (1pt);\draw[fill=black!0] (B2) circle (1pt);
\draw[fill=black!0] (C2) circle (1pt);\draw[fill=black!0] (D2) circle (1pt);
\draw[fill=black!0] (E2) circle (1pt);\draw[fill=black!0] (F2) circle (1pt);
\node[anchor=south east] at ($(D2)!0.5!(F2)$) {\color{modra}\scalebox{0.9}{$\Delta_1$}};
\node[anchor=east] at ($(E2)!0.5!(F2)$) {\color{zelena}\scalebox{0.9}{$\Delta_2$}};
\path (9,3) coordinate (A3) (9,0) coordinate (B3)
	  (11,0) coordinate (C3) (10,1) coordinate (D3)
	  (11,3) coordinate (E3) (11,2) coordinate (F3);	  
\draw[thick, fill=black!30!white] (B3) -- (C3) -- (D3) -- cycle;
\draw[thick] (A3) -- (E3);
\draw[very thick, color=zelena] (E3) -- (F3);
\draw[very thick, color=modra] (F3) -- (D3);
\draw[thick] (A3) -- (B3);
\draw[fill=black!0] (A3) circle (1pt);\draw[fill=black!0] (B3) circle (1pt);
\draw[fill=black!0] (C3) circle (1pt);\draw[fill=black!0] (D3) circle (1pt);
\draw[fill=black!0] (E3) circle (1pt);\draw[fill=black!0] (F3) circle (1pt);
\path ($(C)!0.5!(E)$) coordinate (R1)
	  ($(A1)!0.5!(B1)$) coordinate (R2)
	  ($(C1)!0.5!(E1)$) coordinate (R3)
	  ($(A2)!0.5!(B2)$) coordinate (R4)
	  ($(C2)!0.5!(E2)$) coordinate (R5)
	  ($(A3)!0.5!(B3)$) coordinate (R6);
\node at ($(R1)!0.5!(R2)$) {\scalebox{1}{$\rightarrow$}};
\node at ($(R3)!0.5!(R4)$) {\scalebox{1}{$\rightarrow$}};
\node at ($(R5)!0.5!(R6)$) {\scalebox{1}{$\rightarrow$}};
\path ($(C0)!0.5!(D0)$) coordinate (S0) ($(A)!0.5!(B)$) coordinate (S1)
      ($(C)!0.5!(E)$) coordinate (R1) ($(A1)!0.5!(B1)$) coordinate (R2)
	  ($(C1)!0.5!(E1)$) coordinate (R3) ($(A2)!0.5!(B2)$) coordinate (R4)
	  ($(C2)!0.5!(E2)$) coordinate (R5) ($(A3)!0.5!(B3)$) coordinate (R6);
\node at ($(S0)!0.5!(S1)$) {\scalebox{1}{$\rightarrow$}};
\node at ($(R1)!0.5!(R2)$) {\scalebox{1}{$\rightarrow$}};
\node at ($(R3)!0.5!(R4)$) {\scalebox{1}{$\rightarrow$}};
\node at ($(R5)!0.5!(R6)$) {\scalebox{1}{$\rightarrow$}};
\node at ($(A0)!0.5!(B0)+(90:0.3*\l)$) {\scalebox{1}{$K_t^f\colon \cdots$}};
\path (-2.3,-1) coordinate (A4) (11.1,-1) coordinate (B4)
	  (1,-1) coordinate (C4) (4,-1) coordinate (D4)
	  (7,-1) coordinate (E4) (10,-1) coordinate (F4);
\draw[thick, stealth-] (B4) -- (A4);
\draw[very thick, stealth-, color=sliva] ($(D4)+(180:2*\l)$) -- (E4);
\draw[very thick, stealth-, color=sliva] ($(E4)+(0:2*\l)$) -- (D4);
\node at ($(B4)+(270:\l)$) {\scalebox{0.9}{$t$}};
\node at ($(D4)!0.5!(E4)+(270:\l)$) {\color{sliva}\scalebox{0.9}{$2\e$}};
\draw[fill=black] (C4) circle (1pt);\draw[fill=black] (D4) circle (1pt);
\draw[fill=black] (E4) circle (1pt);\draw[fill=black] (F4) circle (1pt);
\node at ($(C4)+(270:\l)$) {\scalebox{0.9}{$t_0$}};
\node at ($(D4)+(270:\l)$) {\scalebox{0.9}{$t_1$}};
\node at ($(E4)+(270:\l)$) {\scalebox{0.9}{$t_2$}};
\node at ($(F4)+(270:\l)$) {\scalebox{0.9}{$t_3$}};
\end{tikzpicture}\end{center}
\vskip.5cm
\begin{center}\begin{tikzpicture}[outer sep=2pt, scale=1.0, baseline=1.4cm]
\def\R{4}\def\r{0.75}\def\m{1.5}\def\kot{-60}\def\l{0.25}
\path (-1.6,3) coordinate (A0) (-1.6,0) coordinate (B0)
      (-1,3) coordinate (C0) (-1,0) coordinate (D0)
      (0,3) coordinate (A) (0,0) coordinate (B)
	  (2,0) coordinate (C) (1,1) coordinate (D)
	  (2,3) coordinate (E) (2,2) coordinate (F);	  
\draw[thick, fill=black!30!white] (B) -- (C) -- (D) -- cycle;
\draw[thick] (A) -- (E);
\draw[fill=black!0] (A) circle (1pt);\draw[fill=black!0] (B) circle (1pt);
\draw[fill=black!0] (C) circle (1pt);\draw[fill=black!0] (D) circle (1pt);
\draw[fill=black!0] (E) circle (1pt);\draw[fill=black!0] (F) circle (1pt);
\path (3,3) coordinate (A1) (3,0) coordinate (B1)
	  (5,0) coordinate (C1) (4,1) coordinate (D1)
	  (5,3) coordinate (E1) (5,2) coordinate (F1);	  
\draw[thick, fill=black!30!white] (B1) -- (C1) -- (D1) -- cycle;
\draw[thick] (A1) -- (E1);
\draw[very thick, color=modra] (F1) -- (D1);
\draw[fill=black!0] (A1) circle (1pt);\draw[fill=black!0] (B1) circle (1pt);
\draw[fill=black!0] (C1) circle (1pt);\draw[fill=black!0] (D1) circle (1pt);
\draw[fill=black!0] (E1) circle (1pt);\draw[fill=black!0] (F1) circle (1pt);
\path (6,3) coordinate (A2) (6,0) coordinate (B2)
	  (8,0) coordinate (C2) (7,1) coordinate (D2)
	  (8,3) coordinate (E2) (8,2) coordinate (F2);	  
\draw[thick, fill=black!30!white] (B2) -- (C2) -- (D2) -- cycle;
\draw[thick] (A2) -- (E2);
\draw[very thick, color=zelena] (E2) -- (F2);
\draw[very thick, color=modra] (F2) -- (D2);
\draw[fill=black!0] (A2) circle (1pt);\draw[fill=black!0] (B2) circle (1pt);
\draw[fill=black!0] (C2) circle (1pt);\draw[fill=black!0] (D2) circle (1pt);
\draw[fill=black!0] (E2) circle (1pt);\draw[fill=black!0] (F2) circle (1pt);
\node[anchor=south east] at ($(D2)!0.5!(F2)$) {\color{modra}\scalebox{0.9}{$\Delta_1$}};
\node[anchor=east] at ($(E2)!0.5!(F2)$) {\color{zelena}\scalebox{0.9}{$\Delta_2$}};
\path (9,3) coordinate (A3) (9,0) coordinate (B3)
	  (11,0) coordinate (C3) (10,1) coordinate (D3)
	  (11,3) coordinate (E3) (11,2) coordinate (F3);	  
\draw[thick, fill=black!30!white] (B3) -- (C3) -- (D3) -- cycle;
\draw[thick] (A3) -- (E3);
\draw[very thick, color=zelena] (E3) -- (F3);
\draw[very thick, color=modra] (F3) -- (D3);
\draw[thick] (A3) -- (B3);
\draw[fill=black!0] (A3) circle (1pt);\draw[fill=black!0] (B3) circle (1pt);
\draw[fill=black!0] (C3) circle (1pt);\draw[fill=black!0] (D3) circle (1pt);
\draw[fill=black!0] (E3) circle (1pt);\draw[fill=black!0] (F3) circle (1pt);
\path ($(C)!0.5!(E)$) coordinate (R1)
	  ($(A1)!0.5!(B1)$) coordinate (R2)
	  ($(C1)!0.5!(E1)$) coordinate (R3)
	  ($(A2)!0.5!(B2)$) coordinate (R4)
	  ($(C2)!0.5!(E2)$) coordinate (R5)
	  ($(A3)!0.5!(B3)$) coordinate (R6);
\node at ($(R1)!0.5!(R2)$) {\scalebox{1}{$\rightarrow$}};
\node at ($(R3)!0.5!(R4)$) {\scalebox{1}{$\rightarrow$}};
\node at ($(R5)!0.5!(R6)$) {\scalebox{1}{$\rightarrow$}};
\path ($(C0)!0.5!(D0)$) coordinate (S0) ($(A)!0.5!(B)$) coordinate (S1)
      ($(C)!0.5!(E)$) coordinate (R1) ($(A1)!0.5!(B1)$) coordinate (R2)
	  ($(C1)!0.5!(E1)$) coordinate (R3) ($(A2)!0.5!(B2)$) coordinate (R4)
	  ($(C2)!0.5!(E2)$) coordinate (R5) ($(A3)!0.5!(B3)$) coordinate (R6);
\node at ($(S0)!0.5!(S1)$) {\scalebox{1}{$\rightarrow$}};
\node at ($(R1)!0.5!(R2)$) {\scalebox{1}{$\rightarrow$}};
\node at ($(R3)!0.5!(R4)$) {\scalebox{1}{$\rightarrow$}};
\node at ($(R5)!0.5!(R6)$) {\scalebox{1}{$\rightarrow$}};
\node at ($(A0)!0.5!(B0)+(90:0.3*\l)$) {\scalebox{1}{$K_t^g\colon \cdots$}};
\path (-2.3,-1) coordinate (A4) (11.1,-1) coordinate (B4)
	  (1,-1) coordinate (C4) (4,-1) coordinate (D4)
	  (7,-1) coordinate (E4) (10,-1) coordinate (F4);
\draw[thick, stealth-] (B4) -- (A4);
\draw[very thick, stealth-, color=sliva] ($(D4)+(180:2*\l)$) -- (E4);
\draw[very thick, stealth-, color=sliva] ($(E4)+(0:2*\l)$) -- (D4);
\node at ($(B4)+(270:\l)$) {\scalebox{0.9}{$t$}};
\node at ($(D4)!0.5!(E4)+(270:\l)$) {\color{sliva}\scalebox{0.9}{$2\e$}};
\draw[fill=black] (C4) circle (1pt);\draw[fill=black] (D4) circle (1pt);
\draw[fill=black] (E4) circle (1pt);\draw[fill=black] (F4) circle (1pt);
\node at ($(C4)+(270:\l)$) {\scalebox{0.9}{$t'_0$}};
\node at ($(D4)+(270:\l)$) {\scalebox{0.9}{$t'_1$}};
\node at ($(E4)+(270:\l)$) {\scalebox{0.9}{$t'_2$}};
\node at ($(F4)+(270:\l)$) {\scalebox{0.9}{$t'_3$}};
\end{tikzpicture}\end{center}
\caption{\small Consider the filtrations $K^f$ and $K^g$ shown above. Let $\a = [B-A]\in H_{0}(K)$, $\Delta_1 = DF$ and $\Delta_2 = EF$. Assume that $|f-g|<\e$, $t_2-t_1<2\e$ and $t'_2-t'_1<2\e$. Then $\Sigma_{\e} = \{\Delta_{1},\Delta_{2}\}$. In the first case $\a$ is terminated by $\Delta_1$ at $t = t_2$. In the second case $\a$ is terminated by $\Delta_2$ at $t = t'_2$. We say that $\Delta_1$ and $\Delta_2$ are \textbf{sequentially critical}: both need to be present to terminate $\a$, the one that appears first terminates another class in the same dimension and the one that appears second is the one that terminates $\a$.}
\label{example5figure2}
\end{figure}
\begin{figure}[!ht]
\begin{center}
\begin{tikzpicture}[outer sep=0.8pt, scale=.9, baseline=1.4cm]
\def\R{4}\def\r{0.75}\def\m{1.5}\def\kot{-60}\def\l{0.25}
\path (-1.6,3) coordinate (A0) (-1.6,0) coordinate (B0)
      (-1,3) coordinate (C0) (-1,0) coordinate (D0)
      (0,3) coordinate (A) (0,0) coordinate (B)
	  (2,0) coordinate (C) (1,1) coordinate (D)
	  (2,3) coordinate (E) (2,2) coordinate (F);	  
\draw[thick, fill=black!30!white] (B) -- (C) -- (D) -- cycle;
\draw[thick] (A) -- (E);
\draw[fill=black!0] (A) circle (1pt);\draw[fill=black!0] (B) circle (1pt);
\draw[fill=black!0] (C) circle (1pt);\draw[fill=black!0] (D) circle (1pt);
\draw[fill=black!0] (E) circle (1pt);\draw[fill=black!0] (F) circle (1pt);
\node at ($(A)+(180:\l)$) {\scalebox{0.8}{$A$}};
\node at ($(D)+(120:\l)$) {\scalebox{0.8}{$D$}};
\node at ($(E)+(0:\l)$) {\scalebox{0.8}{$E$}};
\node at ($(F)+(0:\l)$) {\scalebox{0.8}{$F$}};
\node at ($(B)+(180:\l)$) {\scalebox{0.8}{$B$}};
\node at ($(C)+(0:\l)$) {\scalebox{0.8}{$C$}};
\path (3,3) coordinate (A1) (3,0) coordinate (B1)
	  (5,0) coordinate (C1) (4,1) coordinate (D1)
	  (5,3) coordinate (E1) (5,2) coordinate (F1);	  
\draw[thick, fill=black!30!white] (B1) -- (C1) -- (D1) -- cycle;
\draw[thick] (A1) -- (E1);
\draw[very thick, color=zelena] (A1) -- (B1);
\draw[fill=black!0] (A1) circle (1pt);\draw[fill=black!0] (B1) circle (1pt);
\draw[fill=black!0] (C1) circle (1pt);\draw[fill=black!0] (D1) circle (1pt);
\draw[fill=black!0] (E1) circle (1pt);\draw[fill=black!0] (F1) circle (1pt);
\path (6,3) coordinate (A2) (6,0) coordinate (B2)
	  (8,0) coordinate (C2) (7,1) coordinate (D2)
	  (8,3) coordinate (E2) (8,2) coordinate (F2);	  
\draw[thick, fill=black!30!white] (B2) -- (C2) -- (D2) -- cycle;
\draw[thick] (A2) -- (E2);
\draw[very thick, color=zelena] (A2) -- (B2);
\draw[very thick, color=modra] (E2) -- (D2);
\draw[fill=black!0] (A2) circle (1pt);\draw[fill=black!0] (B2) circle (1pt);
\draw[fill=black!0] (C2) circle (1pt);\draw[fill=black!0] (D2) circle (1pt);
\draw[fill=black!0] (E2) circle (1pt);\draw[fill=black!0] (F2) circle (1pt);
\node[anchor=south east] at ($(D2)!0.5!(E2)$) {\color{modra}\scalebox{0.9}{$\Delta_2$}};
\node[anchor=west] at ($(A2)!0.5!(B2)$) {\color{zelena}\scalebox{0.9}{$\Delta_1$}};
\path (9,3) coordinate (A3) (9,0) coordinate (B3)
	  (11,0) coordinate (C3) (10,1) coordinate (D3)
	  (11,3) coordinate (E3) (11,2) coordinate (F3);	  
\draw[thick, fill=black!30!white] (B3) -- (C3) -- (D3) -- cycle;
\draw[thick] (A3) -- (E3);
\draw[very thick, color=zelena] (A3) -- (B3);
\draw[very thick, color=modra] (E3) -- (D3);
\draw[thick] (C3) -- (F3);
\draw[fill=black!0] (A3) circle (1pt);\draw[fill=black!0] (B3) circle (1pt);
\draw[fill=black!0] (C3) circle (1pt);\draw[fill=black!0] (D3) circle (1pt);
\draw[fill=black!0] (E3) circle (1pt);\draw[fill=black!0] (F3) circle (1pt);
\path ($(C0)!0.5!(D0)$) coordinate (S0) ($(A)!0.5!(B)$) coordinate (S1)
      ($(C)!0.5!(E)$) coordinate (R1) ($(A1)!0.5!(B1)$) coordinate (R2)
	  ($(C1)!0.5!(E1)$) coordinate (R3) ($(A2)!0.5!(B2)$) coordinate (R4)
	  ($(C2)!0.5!(E2)$) coordinate (R5) ($(A3)!0.5!(B3)$) coordinate (R6);
\node at ($(S0)!0.5!(S1)$) {\scalebox{1}{$\rightarrow$}};
\node at ($(R1)!0.5!(R2)$) {\scalebox{1}{$\rightarrow$}};
\node at ($(R3)!0.5!(R4)$) {\scalebox{1}{$\rightarrow$}};
\node at ($(R5)!0.5!(R6)$) {\scalebox{1}{$\rightarrow$}};
\node at ($(A0)!0.5!(B0)+(90:0.3*\l)$) {\scalebox{1}{$K_t^f\colon \cdots$}};
\path (-2.3,-1) coordinate (A4) (11.1,-1) coordinate (B4)
	  (1,-1) coordinate (C4) (4,-1) coordinate (D4)
	  (7,-1) coordinate (E4) (10,-1) coordinate (F4);
\draw[thick, stealth-] (B4) -- (A4);
\draw[very thick, stealth-, color=sliva] ($(D4)+(180:2*\l)$) -- (E4);
\draw[very thick, stealth-, color=sliva] ($(E4)+(0:2*\l)$) -- (D4);
\node at ($(B4)+(270:\l)$) {\scalebox{0.9}{$t$}};
\node at ($(D4)!0.5!(E4)+(270:\l)$) {\color{sliva}\scalebox{0.9}{$2\e$}};
\draw[fill=black] (C4) circle (1pt);\draw[fill=black] (D4) circle (1pt);
\draw[fill=black] (E4) circle (1pt);\draw[fill=black] (F4) circle (1pt);
\node at ($(C4)+(270:\l)$) {\scalebox{0.9}{$t_0$}};
\node at ($(D4)+(270:\l)$) {\scalebox{0.9}{$t_1$}};
\node at ($(E4)+(270:\l)$) {\scalebox{0.9}{$t_2$}};
\node at ($(F4)+(270:\l)$) {\scalebox{0.9}{$t_3$}};
\end{tikzpicture}\end{center}
\vskip.5cm
\begin{center}\begin{tikzpicture}[outer sep=2pt, scale=1.0, baseline=1.4cm]
\def\R{4}\def\r{0.75}\def\m{1.5}\def\kot{-60}\def\l{0.25}
\path (-1.6,3) coordinate (A0) (-1.6,0) coordinate (B0)
      (-1,3) coordinate (C0) (-1,0) coordinate (D0)
      (0,3) coordinate (A) (0,0) coordinate (B)
	  (2,0) coordinate (C) (1,1) coordinate (D)
	  (2,3) coordinate (E) (2,2) coordinate (F);	  
\draw[thick, fill=black!30!white] (B) -- (C) -- (D) -- cycle;
\draw[thick] (A) -- (E);
\draw[fill=black!0] (A) circle (1pt);\draw[fill=black!0] (B) circle (1pt);
\draw[fill=black!0] (C) circle (1pt);\draw[fill=black!0] (D) circle (1pt);
\draw[fill=black!0] (E) circle (1pt);\draw[fill=black!0] (F) circle (1pt);
\path (3,3) coordinate (A1) (3,0) coordinate (B1)
	  (5,0) coordinate (C1) (4,1) coordinate (D1)
	  (5,3) coordinate (E1) (5,2) coordinate (F1);	  
\draw[thick, fill=black!30!white] (B1) -- (C1) -- (D1) -- cycle;
\draw[thick] (A1) -- (E1);
\draw[very thick, color=modra] (E1) -- (D1);
\draw[fill=black!0] (A1) circle (1pt);\draw[fill=black!0] (B1) circle (1pt);
\draw[fill=black!0] (C1) circle (1pt);\draw[fill=black!0] (D1) circle (1pt);
\draw[fill=black!0] (E1) circle (1pt);\draw[fill=black!0] (F1) circle (1pt);
\path (6,3) coordinate (A2) (6,0) coordinate (B2)
	  (8,0) coordinate (C2) (7,1) coordinate (D2)
	  (8,3) coordinate (E2) (8,2) coordinate (F2);	  
\draw[thick, fill=black!30!white] (B2) -- (C2) -- (D2) -- cycle;
\draw[thick] (A2) -- (E2);
\draw[very thick, color=zelena] (A2) -- (B2);
\draw[very thick, color=modra] (E2) -- (D2);
\draw[fill=black!0] (A2) circle (1pt);\draw[fill=black!0] (B2) circle (1pt);
\draw[fill=black!0] (C2) circle (1pt);\draw[fill=black!0] (D2) circle (1pt);
\draw[fill=black!0] (E2) circle (1pt);\draw[fill=black!0] (F2) circle (1pt);\node[anchor=south east] at ($(D2)!0.5!(E2)$) {\color{modra}\scalebox{0.9}{$\Delta_2$}};
\node[anchor=west] at ($(A2)!0.5!(B2)$) {\color{zelena}\scalebox{0.9}{$\Delta_1$}};
\path (9,3) coordinate (A3) (9,0) coordinate (B3)
	  (11,0) coordinate (C3) (10,1) coordinate (D3)
	  (11,3) coordinate (E3) (11,2) coordinate (F3);	  
\draw[thick, fill=black!30!white] (B3) -- (C3) -- (D3) -- cycle;
\draw[thick] (A3) -- (E3);
\draw[very thick, color=zelena] (A3) -- (B3);
\draw[very thick, color=modra] (E3) -- (D3);
\draw[thick] (C3) -- (F3);
\draw[fill=black!0] (A3) circle (1pt);\draw[fill=black!0] (B3) circle (1pt);
\draw[fill=black!0] (C3) circle (1pt);\draw[fill=black!0] (D3) circle (1pt);
\draw[fill=black!0] (E3) circle (1pt);\draw[fill=black!0] (F3) circle (1pt);
\path ($(C)!0.5!(E)$) coordinate (R1)
	  ($(A1)!0.5!(B1)$) coordinate (R2)
	  ($(C1)!0.5!(E1)$) coordinate (R3)
	  ($(A2)!0.5!(B2)$) coordinate (R4)
	  ($(C2)!0.5!(E2)$) coordinate (R5)
	  ($(A3)!0.5!(B3)$) coordinate (R6);
\node at ($(R1)!0.5!(R2)$) {\scalebox{1}{$\rightarrow$}};
\node at ($(R3)!0.5!(R4)$) {\scalebox{1}{$\rightarrow$}};
\node at ($(R5)!0.5!(R6)$) {\scalebox{1}{$\rightarrow$}};
\path ($(C0)!0.5!(D0)$) coordinate (S0) ($(A)!0.5!(B)$) coordinate (S1)
      ($(C)!0.5!(E)$) coordinate (R1) ($(A1)!0.5!(B1)$) coordinate (R2)
	  ($(C1)!0.5!(E1)$) coordinate (R3) ($(A2)!0.5!(B2)$) coordinate (R4)
	  ($(C2)!0.5!(E2)$) coordinate (R5) ($(A3)!0.5!(B3)$) coordinate (R6);
\node at ($(S0)!0.5!(S1)$) {\scalebox{1}{$\rightarrow$}};
\node at ($(R1)!0.5!(R2)$) {\scalebox{1}{$\rightarrow$}};
\node at ($(R3)!0.5!(R4)$) {\scalebox{1}{$\rightarrow$}};
\node at ($(R5)!0.5!(R6)$) {\scalebox{1}{$\rightarrow$}};
\node at ($(A0)!0.5!(B0)+(90:0.3*\l)$) {\scalebox{1}{$K_t^g\colon \cdots$}};
\path (-2.3,-1) coordinate (A4) (11.1,-1) coordinate (B4)
	  (1,-1) coordinate (C4) (4,-1) coordinate (D4)
	  (7,-1) coordinate (E4) (10,-1) coordinate (F4);
\draw[thick, stealth-] (B4) -- (A4);
\draw[very thick, stealth-, color=sliva] ($(D4)+(180:2*\l)$) -- (E4);
\draw[very thick, stealth-, color=sliva] ($(E4)+(0:2*\l)$) -- (D4);
\node at ($(B4)+(270:\l)$) {\scalebox{0.9}{$t$}};
\node at ($(D4)!0.5!(E4)+(270:\l)$) {\color{sliva}\scalebox{0.9}{$2\e$}};
\draw[fill=black] (C4) circle (1pt);\draw[fill=black] (D4) circle (1pt);
\draw[fill=black] (E4) circle (1pt);\draw[fill=black] (F4) circle (1pt);
\node at ($(C4)+(270:\l)$) {\scalebox{0.9}{$t'_0$}};
\node at ($(D4)+(270:\l)$) {\scalebox{0.9}{$t'_1$}};
\node at ($(E4)+(270:\l)$) {\scalebox{0.9}{$t'_2$}};
\node at ($(F4)+(270:\l)$) {\scalebox{0.9}{$t'_3$}};
\end{tikzpicture}\end{center}
\caption{\small Consider the filtrations $K^f$ and $K^g$ shown above. Let $\a = [B-A]\in H_{0}(K)$, $\Delta_1 = AB$ and $\Delta_2 = DE$. Assume that $|f-g|<\e$, $t_2-t_1<2\e$ and $t'_2-t'_1<2\e$. Then $\Sigma_{\e} = \{\Delta_{1},\Delta_{2}\}$. In the first case $\a$ is terminated by $\Delta_1$ at $t = t_1$. In the second case $\a$ is terminated by $\Delta_2$ at $t = t'_1$. We say that $\Delta_1$ and $\Delta_2$ are \textbf{independently critical}. One of them terminates $\a$ and the other creates a new class in dimension above.}
\label{example5figure1}
\end{figure}
\begin{example}\label{example6}
Consider the following example with $\alpha = [B-A]$, $\Delta_1 = AB$, $\Delta_2 = AC$ and $\Delta_3 = BC$.
If the order of the simplices is
$$A,B,C,AB,BC,AC,$$
then $\alpha$ is terminated by $\Delta_1$. If we assume $f$ assigns to the simplices the values $1$, $2$, $3$, $4$, $5$ and $6$ in this order, then for $\e\leq1$ we have $\Sigma_\e = \{\Delta_1\}$. If the another function changes the order to any where $\Delta_1$ occurs after $\Delta_2$ and $\Delta_3$, namely
$$A,B,C,BC,AC,AB\quad\textrm{ or }\quad A,B,C,AC,BC,AB,$$
then $\alpha$ is terminated by $\Delta_2$ and $\Delta_3$, respectively, and in both cases $\Sigma_\e = \{\Delta_1, \Delta_2, \Delta_3\}$ when $\e>1$. In this case it is not possible for $\Sigma_\e$ to have exactly two elements and $|\Sigma_\e|$ jumps from $1$ to $3$. An analoguous example with $n+2$ vertices and $n+2$ edges (see Figure \ref{example6figure1}) shows that $|\Sigma_\e|$ can jump from $1$ to $n+2$ for any positive integer $n$.
\end{example}

\begin{figure}[ht]
\begin{center}\begin{tikzpicture}[outer sep=2pt, scale=1.2]
\def\R{4}\def\r{0.75}\def\m{1.5}\def\kot{-60}\def\l{0.3}
\path (0,0) coordinate (A)
	  (0,2) coordinate (B)
	  (-1.71,1) coordinate (C);
\draw[thick] (A) -- (B) -- (C) -- cycle;
\node at ($(A)+(-90:\l)$) {\scalebox{0.8}{$A$}};
\node at ($(B)+( 90:\l)$) {\scalebox{0.8}{$B$}};
\node at ($(C)+(180:\l)$) {\scalebox{0.8}{$C$}};
\draw[fill=black!0] (A) circle (1pt);
\draw[fill=black!0] (B) circle (1pt);
\draw[fill=black!0] (C) circle (1pt);
\end{tikzpicture}\qquad\qquad
\begin{tikzpicture}[outer sep=2pt, scale=1.2]
\def\R{4}\def\r{0.75}\def\m{1.5}\def\kot{-60}\def\l{0.3}
\path (0.4,0.5) coordinate (A)
	  (0.4,1.5) coordinate (B)
	  (-0.4,1.9) coordinate (C1)
	  (-1.2,1.5) coordinate (C2)
	  (-1.2,0.5) coordinate (C3)
	  (-0.4,0.1) coordinate (C4)
	  (-1.2,1.08) coordinate (C);
\draw[thick] (C3) -- (C4) -- (A) -- (B) -- (C1) -- (C2);
\node at ($(A)+(-90:\l)$) {\scalebox{0.8}{$A$}};
\node at ($(B)+( 90:\l)$) {\scalebox{0.8}{$B$}};
\node at ($(C)$) {\scalebox{0.8}{$\vdots$}};
\node at ($(C1)+( 90:\l)$) {\scalebox{0.8}{$C_1$}};
\node at ($(C2)+(135:\l)$) {\scalebox{0.8}{$C_2$}};
\node at ($(C3)+(225:\l)$) {\scalebox{0.8}{$C_{n-1}$}};
\node at ($(C4)+(-90:\l)$) {\scalebox{0.8}{$C_{n}$}};
\draw[fill=black!0] (A) circle (1pt);
\draw[fill=black!0] (B) circle (1pt);
\draw[fill=black!0] (C1) circle (1pt);
\draw[fill=black!0] (C2) circle (1pt);
\draw[fill=black!0] (C3) circle (1pt);
\draw[fill=black!0] (C4) circle (1pt);
\end{tikzpicture}
\end{center}
\caption{\small Example \ref{example6}: $|\Sigma_\e|$ can increase by more than 1.}
\label{example6figure1}
\end{figure}
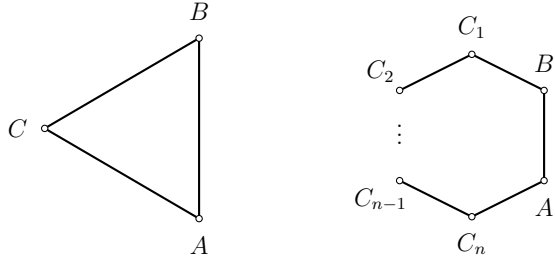

\subsection{Sufficient conditions for rigidity of homology classes}

\begin{theorem}
\label{ThmConseq1}
Given an injective filtration function $f$ on a simplicial complex $K$ and a homology class $[\alpha]\in H_n(K^f_a)$ born at $a$ and terminating at $b$, let:
\begin{itemize}
 \item $R_u =\min\{|f(\tau)-b|;\; \tau \textrm{ a birth $(n+1)$-simplex with } f(\tau)>b\}$ or $\infty$ if undefined;
 \item $R_l =\min\{|f(\tau)-b|;\; \tau \textrm{ a terminal $(n+1)$-simplex with } f(\tau)<b\}$ or $\infty$ if undefined.
\end{itemize}
Then $[\alpha]$ is $\e$-terminally-rigid for $\e=\frac{1}{2}\min\{b-a, R_u, R_l\}.$ 
\end{theorem}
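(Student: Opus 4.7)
The plan is to argue by contradiction and apply Proposition \ref{PropStep2} at the first threshold of non-rigidity. Set $\e = \tfrac12\min\{b-a, R_u, R_l\}$; note that $\e\in D$, so $[\alpha]$ remains present and non-trivial in every $\e$-perturbation. Suppose, for contradiction, that $|\Sigma_\e|\geq 2$. By Lemma \ref{LemmaUSC}, the map $x\mapsto|\Sigma_x|$ is an increasing, lower semi-continuous step function on $D$ that equals $1$ for all $x<\rho/2$, so the set $\{x\in D : |\Sigma_x|=1\}$ is an initial interval $(0,t_0]$, and the assumption $|\Sigma_\e|\geq 2$ forces $t_0<\e$.

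Applying Proposition \ref{PropStep2} at this threshold produces distinct indices $i,j$ with $2t_0 = f(\sigma_j)-f(\sigma_i)$, and the unique element $\Delta_1$ of $\Sigma_{t_0}$ satisfies $\Delta_1\in\{\sigma_i,\sigma_j\}$. Since $\Delta_1$ terminates $[\alpha]$ in $K^f$ itself, $f(\Delta_1)=b$. In Case (3) of the proposition we have $\Delta_1=\sigma_j$ and $\sigma_i$ is a terminal $(n{+}1)$-simplex with $f(\sigma_i)=b-2t_0<b$; hence $\sigma_i$ contributes $|f(\sigma_i)-b|=2t_0$ to the set defining $R_l$, giving $R_l\leq 2t_0$. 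In Case (4) we have $\Delta_1=\sigma_i$ and $\sigma_j$ is a birth $(n{+}1)$-simplex with $f(\sigma_j)=b+2t_0>b$, so $\sigma_j$ contributes $2t_0$ to the set defining $R_u$, giving $R_u\leq 2t_0$. In either case
\[
\min\{R_u,R_l\}\;\leq\;2t_0\;<\;2\e\;\leq\;\min\{R_u,R_l\},
\]
which is the desired contradiction.

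The main foreseeable obstacle is that Proposition \ref{PropStep2} is stated under a genericity assumption, while Theorem \ref{ThmConseq1} only assumes injectivity of $f$. I would handle this either by inspecting the proof of the proposition and noting that genericity is used solely to guarantee the uniqueness of the pair $(\sigma_i,\sigma_j)$ witnessing the discontinuity at $t_0$; the dichotomy (3)/(4) and its conclusions are local to each such pair and hence apply to at least one of them. Alternatively, one can perturb $f$ to a nearby generic $f'$ with $\|f-f'\|_\infty<\eta$, apply the generic case to $f'$, and track how $b-a$, $R_u$, $R_l$ change (each by at most $O(\eta)$) to transport the conclusion back to $f$. The latter approach requires bookkeeping of the safety margin inside $\e$ to ensure the perturbed quantities do not drop below $2\e$; the former seems cleaner. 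Beyond this technical point, the theorem is essentially a one-line arithmetic consequence of Proposition \ref{PropStep2}.
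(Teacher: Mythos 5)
Your proof is correct and follows the same route as the paper: apply Proposition~\ref{PropStep2} at the first threshold $t_0$ of non-rigidity to obtain the pair $(\sigma_i,\sigma_j)$, identify $f(\Delta_1)=b$, read off $R_l\leq 2t_0$ or $R_u\leq 2t_0$ from cases (3) and (4), and derive a contradiction for generic $f$ --- this is exactly the content the paper compresses into its one-line claim that the generic case follows from Proposition~\ref{PropStep2}. Of your two proposals for non-generic $f$, the second (perturb to a nearby generic $f_\delta$, note that $b-a$, $R_u$, $R_l$ shift by at most $2\delta$, conclude $(\e-2\delta)$-rigidity of $f$, and let $\delta\to 0$ via Lemma~\ref{LemmaUSC}) is precisely the paper's argument; your first alternative is riskier, since the proof of Proposition~\ref{PropStep2} relies on the swap between $\Pi_{t_0}$ and $\Pi_t$ being confined to a single set $A_q$, and with several coincident gaps $|f(\sigma_i)-f(\sigma_j)|=2t_0$ the limiting analysis would have to track multiple such sets at once rather than apply ``locally'' to each pair.
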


\begin{proof}
For a generic $f$ the conclusion follows from Proposition \ref{PropStep2}. 

Assume $f$ is not generic. For a positive $\delta < \max\{\ir(f)/2,\e/2 \} $ choose a generic function $f_\delta$ such that $||f-f_\delta || \leq \delta$. Quantities $b-a, R_u, R_l$ for this new function are at most $2\delta$ smaller than the original quantities for $f$ and hence $[\alpha]$ in $K^{f_\delta}$  is  $(\e-\delta)$-terminally-rigid.
As any $(\e-2\delta)$-perturbation of $f$ is also an $(\e-\delta)$-perturbation of $f_\delta$, we conclude $f$ is $(\e-2\delta)$-terminally-rigid. As $\delta$ may be arbitrarily small, Lemma \ref{LemmaUSC} concludes the proof.
\end{proof}

\section{Rigidity for barcodes}

In the previous sections we assumed $\e< (b-a)/2$, which ensures that  any $\e$-perturbation $g$ of $f$ still contains a non-trivial homology class $[\alpha]$ in $K^g$. Our main results so far described the change of the terminal simplex of $[\alpha]$ with $\e$-perturbations. The situation is a bit more problematic with barcodes. While the barcodes induced by $f$ and $g$ are at the bottleneck distance at most $\e$, there is no natural way to define the underlying matching \cite{Bauer}. In particular, if $[\alpha]$ represents a bar, the bar matched to it by the isometry theorem may not be represented by $[\alpha]$. On a similar note, given a homology class $[\alpha]$ that is born at $a$ and terminates at $b$, there may be no bar of the form $[a,b)$, see Remark \ref{RemBarcodeIssue} for more details.

\begin{remark}
\label{RemBarcodeIssue}
Let $f$ be an injective filtration function on a simplicial complex $K$ and let $n$ be a positive integer. Assume that for some $a,b\in\mathbb{R}$ an $n$-cycle $\alpha$ is created, so that the corresponding homology class $[\alpha]\in H_n(K_r^f)$ is born at $a$ and terminates at $b$. The creation of $\alpha$ causes the birth of a bar in the persistence diagram. A while later, $\alpha$ might become homologuous to an older cycle $\alpha'$ at a time $c\in (a,b)$, at which point the bar born at $a$ would be terminated according to the elder rule. The class $[\alpha]$ becomes trivial in $H_n(K^f_{b})$ however, so there must exist a bar in the persistence diagram that begins at or before $a$ and dies exactly at $b$. In other words, the starting point of such a bar might be ambiguous, but the moment of its termination is certain. 
\end{remark}

This leads us to focus on a setting in which the same homology class determines the endpoint of a designated bar and of its matched bar arising from $\e$-perturbation. 
 
\begin{theorem}
\label{ThmBarcodeFinal}
Let $f$ be an injective filtration function on a simplicial complex $K$. Assume $[a,b)$ is a bar of $\{H_n(K^f_r)\}_{r\in \R}$ represented by $[\alpha]\in H_n(K^f_a)$  (a homology class born at $a$ and terminating at $b$). Choose $\e < (b-a)/4$ and assume that for all other bars $[a_i,b_i)$ of $\{H_n(K^f_r)\}_{r\in \R}$ either $a_i > a+2\e$ or $b_i < b-2\e$. Let $g$ be an injective filtration function satisfying $||f-g||_\infty \leq \e$, and assume the induced matching matches the bar $[a,b)$ of $\{H_n(K^f_r)\}_{r\in \R}$ to a bar $[a',b')$ of $\{H_n(K^g_r)\}_{r\in \R}$. Then
\begin{enumerate}
 \item[(1)] $[\alpha]$ as a homology class in $\{H_n(K^g_r)\}_{r\in \R}$ terminates at $b'$ and
 \item[(2)] the simplex in $K^f$ terminating the bar $[a,b)$ is the same as the simplex in $K^g$ terminating the bar $[a',b')$ if $\e \leq \frac{1}{2} \min \{R_u,R_l\},$ where: 	
\begin{itemize}
 \item $R_u =\min\{|f(\tau)-b|;\; \tau \textrm{ a birth $(n+1)$-simplex with } f(\tau)>b\}$ or $\infty$ if undefined;
  \item $R_l =\min\{|f(\tau)-b|;\; \tau \textrm{ a terminal $(n+1)$-simplex with } f(\tau)<b\}$ or $\infty$ if undefined.
\end{itemize}
\end{enumerate}
\end{theorem}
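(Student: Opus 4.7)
The plan is to first establish (1) by combining the $\e$-interleaving between $K^f$ and $K^g$ with the separation hypothesis and the stability theorem, and then to deduce (2) almost immediately from Theorem~\ref{ThmConseq1}. The interleaving comes from the standard inclusions $K^f_r\subseteq K^g_{r+\e}$ and $K^g_r\subseteq K^f_{r+\e}$, which follow from $||f-g||_\infty\leq\e$. Since $\alpha$ is a cycle in $K^f_a$, it is already a cycle in $K^g_{a+\e}$; since $\alpha$ bounds in $K^f_b$, it bounds in $K^g_{b+\e}$; and if $\alpha$ were to bound in $K^g_r$ for some $r<b-\e$, it would bound in $K^f_{r+\e}$ with $r+\e<b$, contradicting termination of $[\alpha]$ at $b$ in $K^f$. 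So $[\alpha]$, viewed in $\{H_n(K^g_r)\}_{r\in\R}$, is non-trivial on all of $[a+\e,b-\e)$ and terminates at some $b^g\in[b-\e,b+\e]$.

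The crux of (1) is to show $b^g=b'$. Take any bar $[a'_j,b'_j)$ of $\{H_n(K^g_r)\}_{r\in\R}$ that is active throughout $[a+\e,b-\e)$. Since $b-a>4\e$, such a bar has length strictly greater than $2\e$, so the stability theorem forces it to be matched to a bar $[a_i,b_i)$ of $\{H_n(K^f_r)\}_{r\in\R}$ with $|a_i-a'_j|\leq\e$ and $|b_i-b'_j|\leq\e$; consequently $a_i\leq a+2\e$ and $b_i\geq b-2\e$. The separation hypothesis then identifies $[a_i,b_i)=[a,b)$, and injectivity of the matching forces $[a'_j,b'_j)=[a',b')$, so $[a',b')$ is the unique bar of $g$ alive throughout $[a+\e,b-\e)$. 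Fix an interval decomposition of $\{H_n(K^g_r)\}_{r\in\R}$ and write $[\alpha]\in H_n(K^g_{a+\e})$ as $\sum_j c_j e_j$ over bar generators. Every $j$ with $c_j\neq 0$ satisfies $a'_j\leq a+\e$; if in addition $[a'_j,b'_j)\neq[a',b')$, then this bar is not alive throughout $[a+\e,b-\e)$ and hence $b'_j<b-\e\leq b'$. Since $[\alpha]$ is non-zero throughout $[a+\e,b-\e)$, the coefficient of $[a',b')$ must be non-zero; since the termination scale of $[\alpha]$ equals $\max\{b'_j:c_j\neq 0\}$, we conclude $b^g=b'$.

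For (2), the hypothesis $\e\leq\frac{1}{2}\min\{R_u,R_l\}$ together with $\e<(b-a)/4$ yields $\e\leq\frac{1}{2}\min\{b-a,R_u,R_l\}$, so Theorem~\ref{ThmConseq1} gives $\e$-terminal rigidity of $[\alpha]$; in particular its terminal simplex in $K^g$ equals its terminal simplex in $K^f$. By construction, the terminal simplex of $[\alpha]$ in $K^f$ is the one terminating the bar $[a,b)$, while by part (1) the terminal simplex of $[\alpha]$ in $K^g$ is the one terminating the bar $[a',b')$, so the two agree. The main obstacle is the identification step in (1): one must carefully track how $[\alpha]$ decomposes with respect to the interval decomposition of the $K^g$ persistence module, and then exploit the uniqueness of the long bar (guaranteed by the combination of the separation hypothesis and stability) to pin the termination scale down to exactly $b'$.
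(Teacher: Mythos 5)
Your proof is correct and follows essentially the same route as the paper's: you use the separation hypothesis together with stability to show that every bar of $K^g$ containing $a+\e$ other than $[a',b')$ dies before $b-\e$, decompose $[\alpha]$ in the interval decomposition of $\{H_n(K^g_r)\}_{r\in\R}$ to force a nonzero coefficient on $[a',b')$ and hence termination at $b'$, and then deduce (2) from Theorem~\ref{ThmConseq1}. The extra details you supply (the explicit interleaving inclusions and the identification of $[a',b')$ as the unique long bar via injectivity of the matching) simply spell out steps the paper leaves implicit.
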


\begin{proof}
Let
$$\{H_n(K^g_r)\}_{r\in \R} = \bigoplus_{j\in J} \FF_{[a'_j,b'_j)}$$
be the decomposition into interval modules, indexed so that $[a',b')$ corresponds to $[a'_0,b'_0)$. Our assumption on the barcode of $ \{H_n(K^f_r)\}_{r\in \R}$ and the stability theorem imply the following: 
if $a + \e \in [a'_j,b'_j)$ then either $j=0$ or $b'_j <b - \e$. For each $j$ let $[\alpha_j]$ be the homology class corresponding to the bar $[a'_j,b'_j)$, i.e., $[\alpha_j]$ is born at $a'_j$ and terminates at $b'_j$. 
 
The homology class 
$$[\beta] = \sum_{j\in J} \lambda_j [\alpha_j]\in H_n(K^g_{a + \e})$$
terminates either at:
\begin{itemize}
 \item $b'$, if $\lambda_0 \neq 0$, or
 \item before $b - \e$, if $\lambda_0 = 0$.
\end{itemize}
We now apply this observation to $[\alpha]$. By stability theorem it represents an element of $H_n(K^g_{a + \e})$ and does not terminate before $b - \e$, so it terminates at $b'$ and this concludes the proof of (1).
 
Bars $[a,b)$ of $\{H_n(K^f_r)\}_{r\in \R}$ and $[a',b')$ of $\{H_n(K^g_r)\}_{r\in \R}$ both terminate when $[\alpha]$ terminates. Conclusion (2) now follows from Theorem \ref{ThmConseq1}.
\end{proof}

\begin{remark}
Theorem \ref{ThmBarcodeFinal} provides a sufficient condition on the structure of the barcode that guarantees that the simplex terminating the bar matched to the designated bar $[a,b)$ remains constant through $\e$-perturbations of the filtration function $f$. Going beyond the region of unique terminal simplex within the setting of Theorem \ref{ThmBarcodeFinal}, let $\e_0$ be the maximal $\e$ for which the terminal simplex terminating the bar matched to the designated bar $[a,b)$ remains unique $\Delta_1$ through $\e$-perturbations of the filtration function $f$. Proposition \ref{PropStep2} allows us to deduce at least one additional simplex $\Delta_2$ that appears as the terminal simplex terminating the bar matched to the designated bar $[a,b)$ for some $\e$-perturbations of the filtration function $f$ with $\e > \e_0$. Namely, $\Delta_2$ is an $n$-simplex with the function value $f(\Delta_1) \pm 2\e_0$. If $f$ is generic there is only one such simplex.

To put it differently, we may identify potential simplices generating non-rigidity from the structure of the barcode. For a demonstration see Figure \ref{FigMainIntro1g} and Figure \ref{FigMainIntro2g}. 
\end{remark}

\begin{figure}[htp]
\begin{center}\begin{tikzpicture}[outer sep=1pt, scale=1]
\useasboundingbox (0,-0.1) rectangle (13,5.3);
\def\dx{1.4}\def\dy{0.3}\def\a{1}\def\b{9.5}\def\h{5.5}\def\mid{3}\def\ha{1}
\path (0, -1) coordinate (1);
\draw[color=white, fill=modra, opacity=0.3] (\b-2*\dx,0) -- (\b,0) -- (\b,\mid) -- (\b-2*\dx,\mid) -- cycle;
\draw[color=white, fill=zelena, opacity=0.3] (\b+2*\dx,\h) -- (\b,\h) -- (\b,\mid) -- (\b+2*\dx,\mid) -- cycle;
\draw[-stealth] (0,0) -- (13,0);
\draw (\a,-0.1) -- (\a,0.1);
\draw (\a+\dx,-0.1) -- (\a+\dx,0.1);
\draw (\a+2*\dx,-0.1) -- (\a+2*\dx,0.1);
\draw (\b-2*\dx,-0.1) -- (\b-2*\dx,0.1);
\draw (\b-\dx,-0.1) -- (\b-\dx,0.1);
\draw (\b,-0.1) -- (\b,0.1);
\draw (\b+\dx,-0.1) -- (\b+\dx,0.1);
\draw (\b+2*\dx,-0.1) -- (\b+2*\dx,0.1);
\draw[thick, dotted] (0,\mid) -- (13,\mid);
\draw[thick, dotted] (\a,0) -- (\a,\h);
\draw[thick, dotted] (\a+\dx,0) -- (\a+\dx,\h);
\draw[thick, dotted] (\a+2*\dx,0) -- (\a+2*\dx,\h);
\draw[thick, dotted] (\b-2*\dx,0) -- (\b-2*\dx,\h);
\draw[thick, dotted] (\b-\dx,0) -- (\b-\dx,\h);
\draw[thick, dotted] (\b,0) -- (\b,\h);
\draw[thick, dotted] (\b+\dx,0) -- (\b+\dx,\h);
\draw[thick, dotted] (\b+2*\dx,0) -- (\b+2*\dx,\h);
\node at (0.4,0.5*\mid) {\color{modra}\scalebox{0.8}{$n$}};
\node at (0.4,0.5*\mid+0.5*\h) {\color{zelena}\scalebox{0.8}{$n+1$}};
\node[anchor=east] at (\a,\ha) {\color{sliva}\scalebox{0.8}{$[\alpha]$}};
\node[anchor=north] at (\a,0) {\scalebox{0.8}{$a$}};
\node[anchor=north] at (\a+\dx,0) {\scalebox{0.8}{$a+\e$}};
\node[anchor=north] at (\a+2*\dx,0) {\scalebox{0.8}{$a+2\e$}};
\node[anchor=north] at (\b-2*\dx,0) {\scalebox{0.8}{$b-2\e$}};
\node[anchor=north] at (\b-\dx,0) {\scalebox{0.8}{$b-\e$}};
\node[anchor=north] at (\b,0) {\scalebox{0.8}{$b$}};
\node[anchor=north] at (\b+\dx,0) {\scalebox{0.8}{$b+\e$}};
\node[anchor=north] at (\b+2*\dx,0) {\scalebox{0.8}{$b+2\e$}};
\draw[very thick, color=sliva, -stealth] (\a,\ha) -- (\b,\ha);
\draw[very thick, color=modra, -stealth] (\a-\dx/2,\ha-2*\dy) -- (\b-2.5*\dx,\ha-2*\dy);
\draw[very thick, color=modra, -stealth] (\a-\dx/3,\ha-\dy) -- (\a+1.8*\dx,\ha-\dy);
\draw[very thick, color=modra, -stealth] (\a+\dx/2,\ha+\dy) -- (\b-2.1*\dx,\ha+\dy);
\draw[very thick, color=modra, -stealth] (\a+2.2*\dx,\ha+2*\dy) -- (\b+1.5*\dx,\ha+2*\dy);
\draw[very thick, color=modra, -stealth] (\a+2.5*\dx,\ha+3*\dy) -- (\b+0.5*\dx,\ha+3*\dy);
\draw[very thick, color=modra, -stealth] (\b-1.8*\dx,\ha+4*\dy) -- (\b+1.8*\dx,\ha+4*\dy);
\draw[very thick, color=modra, -stealth] (\b-0.4*\dx,\ha+5*\dy) -- (\b+1.2*\dx,\ha+5*\dy);
\draw[very thick, color=zelena, -stealth] (\a-0.2*\dx,\mid+1*\dy) -- (\b-0.2*\dx,\mid+1*\dy);
\draw[very thick, color=zelena, -stealth] (\a+0.6*\dx,\mid+2*\dy) -- (\b-1.3*\dx,\mid+2*\dy);
\draw[very thick, color=zelena, -stealth] (\a+2.2*\dx,\mid+3*\dy) -- (\b+0.4*\dx,\mid+3*\dy);
\draw[very thick, color=zelena, -stealth] (\b-1.1*\dx,\mid+4*\dy) -- (\b+1.3*\dx,\mid+4*\dy);
\draw[very thick, color=zelena, -stealth] (\b-0.8*\dx,\mid+5*\dy) -- (\b+2.2*\dx,\mid+5*\dy);
\draw[very thick, color=zelena, -stealth] (\b-0.2*\dx,\mid+6*\dy) -- (\b+1.8*\dx,\mid+6*\dy);
\draw[very thick, color=zelena, -stealth] (\b+2.1*\dx,\mid+7*\dy) -- (\b+2.5*\dx,\mid+7*\dy);
\end{tikzpicture}
\end{center}
\caption{\small 
The figure represents persistent homology barcodes of filtration $K^f$ in dimensions $n$ and $n+1$. From the barcodes we can deduce (using Theorem \ref{ThmBarcodeFinal}) that the critical simplex terminating the bar matched with the bar $[a,b)$ is constant (rigid) through all $\e$-perturbation of the filtration function $f$. This conclusion follows from the following facts: (i) no $(n+1)$-dimensional bar is born (in the green area) between $b$ and $b+2\e$, (ii) no $n$-dimensional bar ends (in the blue area) between $b-2\e$ and $b$ (except $\a$ at $b$), and (iii) $\a$ is the only bar that lives through $a+2\e$ and $b-2\e$.
\label{FigMainIntro1g}
}
\end{figure}
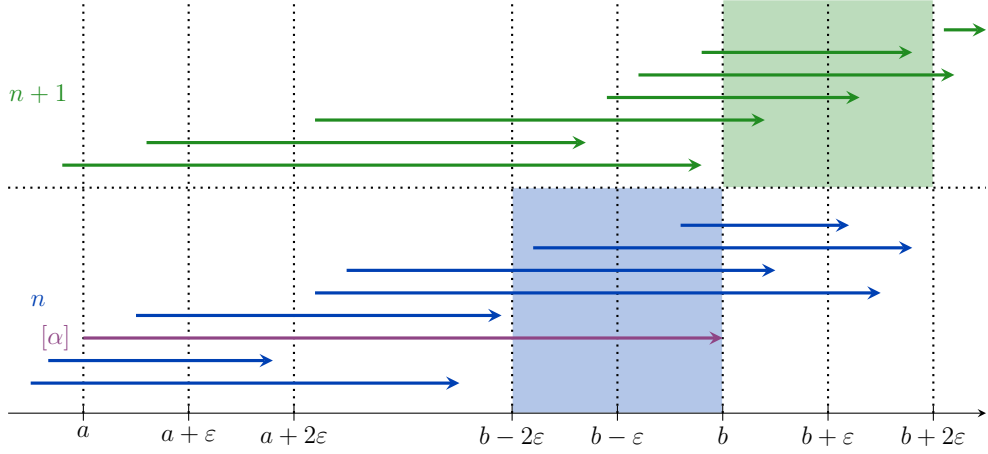
\begin{figure}[htp]
\begin{center}\begin{tikzpicture}[outer sep=1pt, scale=1]
\useasboundingbox (0,-0.1) rectangle (13,5.3);
\def\dx{1.4}\def\dy{0.3}\def\a{1}\def\b{9.5}\def\h{5.5}\def\mid{3}\def\ha{1}
\path (0, -1) coordinate (1);
\draw[color=white, fill=modra, opacity=0.3] (\b-2*\dx,0) -- (\b,0) -- (\b,\mid) -- (\b-2*\dx,\mid) -- cycle;
\draw[color=white, fill=zelena, opacity=0.3] (\b+2*\dx,\h) -- (\b,\h) -- (\b,\mid) -- (\b+2*\dx,\mid) -- cycle;
\draw[-stealth] (0,0) -- (13,0);
\draw (\a,-0.1) -- (\a,0.1);
\draw (\a+\dx,-0.1) -- (\a+\dx,0.1);
\draw (\a+2*\dx,-0.1) -- (\a+2*\dx,0.1);
\draw (\b-2*\dx,-0.1) -- (\b-2*\dx,0.1);
\draw (\b-\dx,-0.1) -- (\b-\dx,0.1);
\draw (\b,-0.1) -- (\b,0.1);
\draw (\b+\dx,-0.1) -- (\b+\dx,0.1);
\draw (\b+2*\dx,-0.1) -- (\b+2*\dx,0.1);
\draw[thick, dotted] (0,\mid) -- (13,\mid);
\draw[thick, dotted] (\a,0) -- (\a,\h);
\draw[thick, dotted] (\a+\dx,0) -- (\a+\dx,\h);
\draw[thick, dotted] (\a+2*\dx,0) -- (\a+2*\dx,\h);
\draw[thick, dotted] (\b-2*\dx,0) -- (\b-2*\dx,\h);
\draw[thick, dotted] (\b-\dx,0) -- (\b-\dx,\h);
\draw[thick, dotted] (\b,0) -- (\b,\h);
\draw[thick, dotted] (\b+\dx,0) -- (\b+\dx,\h);
\draw[thick, dotted] (\b+2*\dx,0) -- (\b+2*\dx,\h);
\node at (0.4,0.5*\mid) {\color{modra}\scalebox{0.8}{$n$}};
\node at (0.4,0.5*\mid+0.5*\h) {\color{zelena}\scalebox{0.8}{$n+1$}};
\node[anchor=east] at (\a,\ha) {\color{sliva}\scalebox{0.8}{$[\alpha]$}};
\node[anchor=north] at (\a,0) {\scalebox{0.8}{$a$}};
\node[anchor=north] at (\a+\dx,0) {\scalebox{0.8}{$a+\e$}};
\node[anchor=north] at (\a+2*\dx,0) {\scalebox{0.8}{$a+2\e$}};
\node[anchor=north] at (\b-2*\dx,0) {\scalebox{0.8}{$b-2\e$}};
\node[anchor=north] at (\b-\dx,0) {\scalebox{0.8}{$b-\e$}};
\node[anchor=north] at (\b,0) {\scalebox{0.8}{$b$}};
\node[anchor=north] at (\b+\dx,0) {\scalebox{0.8}{$b+\e$}};
\node[anchor=north] at (\b+2*\dx,0) {\scalebox{0.8}{$b+2\e$}};
\draw[very thick, color=sliva, -stealth] (\a,\ha) -- (\b,\ha);
\draw[very thick, color=modra, -stealth] (\a-\dx/2,\ha-2*\dy) -- (\b-2.5*\dx,\ha-2*\dy);
\draw[very thick, color=modra, -stealth] (\a-\dx/3,\ha-\dy) -- (\a+1.8*\dx,\ha-\dy);
\draw[very thick, color=modra, -stealth] (\a+\dx/2,\ha+\dy) -- (\b-2*\dx,\ha+\dy);
\draw[very thick, color=modra, -stealth] (\a+2.2*\dx,\ha+2*\dy) -- (\b+1.5*\dx,\ha+2*\dy);
\draw[very thick, color=modra, -stealth] (\a+2.5*\dx,\ha+3*\dy) -- (\b+0.5*\dx,\ha+3*\dy);
\draw[very thick, color=modra, -stealth] (\b-1.8*\dx,\ha+4*\dy) -- (\b+1.8*\dx,\ha+4*\dy);
\draw[very thick, color=modra, -stealth] (\b-0.4*\dx,\ha+5*\dy) -- (\b+1.2*\dx,\ha+5*\dy);
\draw[very thick, color=zelena, -stealth] (\a-0.2*\dx,\mid+1*\dy) -- (\b-0.2*\dx,\mid+1*\dy);
\draw[very thick, color=zelena, -stealth] (\a+0.6*\dx,\mid+2*\dy) -- (\b-1.3*\dx,\mid+2*\dy);
\draw[very thick, color=zelena, -stealth] (\a+2.2*\dx,\mid+3*\dy) -- (\b+0.4*\dx,\mid+3*\dy);
\draw[very thick, color=zelena, -stealth] (\b-1.1*\dx,\mid+4*\dy) -- (\b+1.3*\dx,\mid+4*\dy);
\draw[very thick, color=zelena, -stealth] (\b-0.8*\dx,\mid+5*\dy) -- (\b+2.2*\dx,\mid+5*\dy);
\draw[very thick, color=zelena, -stealth] (\b-0.2*\dx,\mid+6*\dy) -- (\b+1.8*\dx,\mid+6*\dy);
\draw[very thick, color=zelena, -stealth] (\b+2*\dx,\mid+7*\dy) -- (\b+2.5*\dx,\mid+7*\dy);
\draw[fill=black, color=black] (\b+2*\dx,\mid+7*\dy) circle (1.5pt);
\draw[fill=black, color=black] (\b-2*\dx,\ha+\dy) circle (1.5pt);
\end{tikzpicture}
\end{center}
\caption{\small If $\e$ is the scale by which rigidity is broken, we can find a bar in dimension $n+1$ starting at $b+2\e$ or a bar in dimension $n$ ending at $b-2\e$. In this case, for each $\e'>\e$  at least one of the corresponding simplices (appearing at the black dots) appears as the terminal simplex of a bar matched with $[a,b)$ in some $\e'$ perturbation of $f$.}
\label{FigMainIntro2g}
\end{figure}
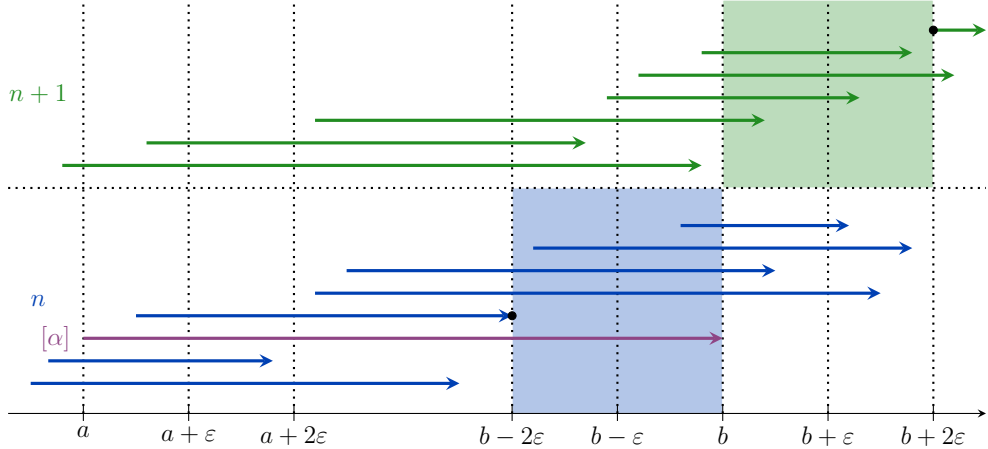

\section{Related work}

After the publication of the first version of this paper we were made aware that a precise relationship with \cite{EdelsMoro} and \cite{Morozov} would be helpful. In these works the authors consider what in our terminology would be phrased as a rigidity of persistence pairs (birth simplex, death simplex) with respect to transpositions of adjacent simplices. On the other hand, we consider rigidity of the terminal simplex of a homology class or a bar with respect of $\varepsilon$-perturbations of filtration functions. The concepts are different insofar as the first one depends only on the permutation of simplices, treats single transpositions of simplices and looks to preserve pairing in persistent homology; on the other hand, our concept treats only the terminal simplices of a homology class, and considers all homology classes (not just the ones generating bars) and all $\varepsilon$-perturbations. The mentioned concepts of rigidity are different, as is demonstrated by Figure \ref{FigDifference}.

The approach of \cite{EdelsMoro} and \cite{Morozov} is algorithmic, has been used to demonstrate change in persistence diagrams via one-parameter modification of filtration functions and yields a proof of stability theorem. The technical treatment is based on the analysis of the matrix reduction based persistence algorithm.

Our approach is to look more generally at any single homology class and consider all $\varepsilon$-perturbations within our direct treatment of homology classes. Along the way we describe how $\varepsilon$-perturbations affect permutations of simplices (Subsection \ref{SubsFM}), at what values of $\varepsilon$ do terminal simplices potentially change and how (Proposition \ref{PropStep2}), and demonstrate that their number might increase by more than one despite a ``single new transposition'' in an incremental increase of $\varepsilon$ (Example \ref{example6}). 

Different rigidity concepts understandably generate different results. Sufficient conditions for rigidity of pairings are given by Nested-Disjoint Lemma in \cite{Morozov}, while sufficient conditions for rigidity of terminal simplices in barcodes are presented in Theorem \ref{ThmBarcodeFinal}. Note that example in Figure \ref{FigMainIntro1g} satisfies only the conditions of Theorem \ref{ThmBarcodeFinal}. We do believe though that the treatment of \cite{Morozov} could be expanded and combined with our results on $\varepsilon$-perturbations to yield another proof of Theorem \ref{ThmBarcodeFinal}. On the other hand, such treatment would not suffice for our main and most general result: Theorem \ref{ThmConseq1}.

\begin{figure}[htbp]
\begin{center}
\includegraphics[scale=1]{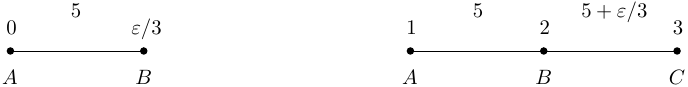}
\caption{Two simplicial complexes with the labels above the simplices indicating filtration values. Let $\varepsilon \in (0,1)$.
The only persistence pair in the left filtration is $(A,AB)$ while an $\varepsilon$-perturbation may change it to $(B,AB)$. On the other hand, the only homology class that is terminated is $[A]-[B]$ and its terminal simplex is always $AB$, hence $[A]-[B]$ is $\varepsilon$-rigid. On the right we see a filtration whose two persistence pairs are rigid with respect to $\varepsilon$-perturbations despite the terminal simplex of $[A]-[C]$ changing from $AB$ to $AC$. The concepts of rigidity of persistence pairs and rigidity of the terminal simplex of a homology class thus differ.}
\label{FigDifference}
\end{center}
\end{figure}

A different treatment of instability of information provided by persistent homology is given in \cite{Bub}. There, the authors recast an unstable output of persistent homology as a real valued function and average it over small perturbations to obtain a stable output. For example, while the cycle generating a persistent homology class is unstable, the approach of \cite{Bub} yields, roughly speaking, a distribution of generating cycles over small perturbations. As such, this approach is aimed at stabilizing the potentially unstable outputs of persistent homology. On the other hand, our work aims to detect the instability of terminal simplices from the structure of filtration or persistence diagram. 

\section{Conclusions and further work}

In this paper we have established sufficient conditions for the rigidity of terminal simplices. One of the main advantages of our result is that the conditions only depend on the persistence diagram. In our subsequent work we intend to extend our approach to a more geometric setting, treating persistence diagrams arising from the popular Vietoris-Rips filtrations on metric spaces. In this setting the instability of terminal simplices should be measured by the distance between terminal simplices in the metric space, as opposed to ``combinatorial'' proximity of this paper. In particular we plan to explore the instability in case of persistence diagrams arising from Vietoris-Rips filtrations of geodesic spaces. Recent results on $S^1$ \cite{AA, Moy} indicate that in this case, the terminal simplices of $1$-dimensional homology form a $1$-parameter family which consequently generates a $3$-dimensional homology class. We intend to provide a general treatment of this phenomenon.


\end{document}